\numberwithin{equation}{section}
\newtheorem{thm}{Theorem}[section]
\newtheorem{rem}{Remark}[section]
\newtheorem{cor}[thm]{Corollary}
\newtheorem{lem}{Lemma}[section]
\newcommand{\la}{\lambda}
\newcommand{\Om}{\Omega}
\newcommand{\p}{\partial}
\title{Numerical Approximation and Bifurcation Results for an Elliptic Problem with Superlinear Subcritical Nonlinearity on the Boundary}
\author{Shalmali Bandyopadhyay, Thomas Lewis, Dustin Nichols}
\begin{document}


\begin{abstract}  We develop numerical algorithms to approximate positive solutions of elliptic boundary value problems with superlinear subcritical nonlinearity on the boundary of the form $-\Delta u + u = 0$ in $\Omega$ with $\frac{\partial u}{\partial \eta} = \lambda f(u)$ on $\partial\Omega$ as well as an extension to a corresponding system of equations. While existence, uniqueness, nonexistence, and multiplicity results for such problems are well-established, their numerical treatment presents computational challenges due to the absence of comparison principles and complex bifurcation phenomena. We present finite difference formulations for both single equations and coupled systems with cross-coupling boundary conditions, establishing admissibility results for the finite difference method. We derive principal eigenvalue analysis for the linearized problems to determine unique bifurcation points from trivial solutions. The eigenvalue analysis provides additional insight into the theoretical properties of the problem while also providing intuition for computing approximate solutions based on the proposed finite difference formulation.  We combine our finite difference methods with continuation methods to trace complete bifurcation curves, validating established existence and uniqueness results and consistent with the results of the principle eigenvalue analysis.   
\end{abstract}
\maketitle 

\noindent
\textbf{Keywords:} Elliptic equations; Superlinear subcritical nonlinearity; Nonlinear boundary conditions; Bifurcation theory; Finite difference methods\\
\noindent \textbf{2020 Mathematics Subject Classification:} Primary: 35J66, 65N06; Secondary: 35B32, 65P30
\markboth{SHALMALI BANDYOPADHYAY, THOMAS LEWIS, DUSTIN NICHOLS}{SHALMALI BANDYOPADHYAY, THOMAS LEWIS, DUSTIN NICHOLS}
\section{Introduction} 

Elliptic partial differential equations play a crucial role in modeling various phenomena, including chemical reactions, ecological systems, population dynamics, and combustion theory \cite{CantrellCosner_2006, LaceyOckendonSabina_1998, Pao_book}. While extensive research has focused on elliptic equations with linear boundary conditions, such as Dirichlet, Neumann, and Robin conditions \cite{Dil_Oth_1999, Gurung-Gokul-Adhikary, Liu-Shi_2018}, certain scenarios where chemical reactions, biological bonding, or species interactions may occur in a narrow layer or region near the boundary necessitate the study of elliptic equations coupled with nonlinear boundary conditions \cite{Amann_1976, CantrellCosner_2006, Gordon-Ko-Shivaji, Inkmann, Liu-Shi_2018, Mavinga-Pardo_PRSE_2017, Amster-DeNapoli, Umezu-2023, BCDMP_2022}.
The mathematical investigation of such problems has attracted considerable attention due to their rich bifurcation structure and multiplicity phenomena. Existence results with parameter dependence for sublinear power nonlinearities have been established \cite{Mavinga-Pardo_PRSE_2017, Umezu-2023}, while superlinear cases with nonlinear boundary conditions have been studied using various analytical techniques \cite{Inkmann, BCDMP_2022}. The combination of variational methods \cite{Amster-DeNapoli}, bifurcation theory \cite{Liu-Shi_2018, Mavinga-Pardo_PRSE_2017}, and topological degree methods \cite{Gordon-Ko-Shivaji} has proven particularly effective in understanding the solution structure of superlinear problems.

In \cite{BCDMP_2024}, the authors studied the following elliptic problem with nonlinearity on the boundary:
\begin{equation}
\label{pde_lambda}
\left\{
\begin{array}{rcll}
-\Delta u +u &=&  0 \quad &\mbox{in}\quad \Om\,,\\
\frac{\p u}{\p \widehat{n}} &=& \la f(u)\quad &\mbox{on}\quad \p \Om\,,
\end{array}
\right. 
\end{equation}
where $\Omega \subset \mathbb{R}^n$ is bounded, $\lambda>0$ is a parameter, $\widehat{n}$ is the unit outward normal vector, and $f:[0, \infty) \to [0, \infty)$ is a locally Lipschitz continuous function that satisfies superlinear and subcritical growth conditions, i.e., there exists a constant $b>0$ such that
\begin{equation*}
\hspace{-1cm} 
\mathrm{(H)}_\infty \qquad\lim\limits_{s \to +\infty}\frac{f(s)}{s^p}=b
\quad \mbox{ with }\quad \begin{cases}
1 < p < \frac{N}{N-2} \; &\mbox{ if }\;N\ge 3,\\ 
p>1\; &\mbox{ if }\;N=2   \,.
\end{cases}
\end{equation*}
The authors proved existence and multiplicity of positive solutions using rescaling methods combined with degree theory and bifurcation theory. Observe that, for $N = 1$, the subcritical condition is identical as the $N=2$ case: both require only $p > 1$ with no upper bound restriction, unlike $N \geq 3$ which imposes the additional constraint $p < \frac{N}{N-2}$.
In the companion systems paper \cite{BCDMP_Systems}, the authors extended their analysis to the following coupled system:
\begin{equation}
	\label{PDE_Sys}
	\left\{
	\begin{array}{rcll}
		-\Delta u +u&=&0   \quad & \mbox{in}\quad \Omega\,,\\
		-\Delta v +v &=&0  \quad &\mbox{in}\quad \Omega\,,\\
		\frac{\partial u}{\partial \widehat{n}} &=&  \lambda f(v)\quad &\mbox{on}\quad \partial \Omega\,,\\
		\frac{\partial v}{\partial \widehat{n}} &=&  \lambda g(u)\quad &\mbox{on}\quad \partial \Omega\,.
	\end{array}
	\right. 
\end{equation}
The theoretical results in these papers establish several key findings. Under superlinear subcritical conditions, there exists a connected branch of positive solutions bifurcating from infinity when $\lambda \to 0^+$ (Theorem 1.1 in \cite{BCDMP_2024} and \cite{BCDMP_Systems}). When additional conditions near zero are satisfied (specifically $f(0) = 0$, $f'(0) > 0$ for single equations and similar conditions for systems), there exists a global connected branch of positive solutions bifurcating from the trivial solution at specific parameter values (Theorem 1.2 in both papers). Furthermore, the sign of higher-order terms determines whether the bifurcation is subcritical (left) or supercritical (right), leading to different solution multiplicity patterns (Theorem 1.3 in \cite{BCDMP_Systems}). However, in most scenarios, closed-form solutions of PDEs are not known or do not exist. Therefore, to visualize solutions when they exist and validate the theoretical predictions, we utilize numerical methods. Some numerical approximation techniques for reaction-diffusion equations can be found in \cite{Pao1, Pao2, Pao3, Neu-Swift, Neu-Swift-Sie}.  
We will formulate, analyze, and test finite difference methods for directly approximating solutions to \eqref{pde_lambda} and \eqref{PDE_Sys} in this paper while also using the methods to construct the approximate bifurcation diagrams that illustrate the various theoretical results of \cite{BCDMP_2024,BCDMP_Systems}. 

While extensive theoretical literature exists for superlinear elliptic boundary value problems, their numerical treatment has received significantly less attention due to inherent computational challenges that distinguish them from their sublinear counterparts. For sublinear problems, classical monotone iteration methods utilizing sub- and supersolution techniques provide robust computational frameworks where admissibility and convergence is guaranteed through the ordered structure of solutions \cite{Pao_book,Sattinger1973,Lewis_Morris_Zhang_2022}. However, superlinear problems present fundamental difficulties: the lack of a comparison principles makes traditional monotone iteration schemes ineffective, solution multiplicity and bifurcation phenomena require sophisticated continuation methods to capture all solution branches, and the rapid growth of nonlinearities can lead to numerical instabilities near bifurcation points \cite{BCDMP_2024,Mavinga-Pardo_PRSE_2017}. The monotone method, which relies on the existence of ordered sub- and supersolutions, becomes inapplicable when the superlinear growth prevents the construction of suitable comparison functions. Consequently, most existing rigorous numerical studies focus on sublinear cases where the method of upper and lower solutions and monotone iterative techniques can be successfully applied, leaving a significant gap in the computational validation of theoretical predictions for superlinear problems. Our work partially addresses this computational void by developing finite difference algorithms along with a modified sub- and supersolution technique for proving admissibility that can be combined with continuation methods to generate bifurcation diagrams that validate the rich theoretical structure predicted for superlinear elliptic problems with nonlinear boundary conditions. The numerical tests in Sections \ref{subsec:connection-single} and \ref{subsec:connection-systems} illustrate the analytical PDE results.  The numerical analysis and solver strategy rely heavily upon the theoretical results for the underlying problems.

In this manuscript, we develop numerical algorithms for superlinear problems \eqref{pde_lambda} and \eqref{PDE_Sys} using finite difference methods.  The methods will be combined with continuation techniques to validate theoretical results in the one-dimensional case through a series of numerical tests. Our approach addresses fundamental computational challenges through several key innovations: (i) we perform a detailed eigenvalue analysis of the corresponding linearized problems to compute precise bifurcation points that can help construct sufficient initial guesses for solving the discrete nonlinear system of equations and generating bifurcation curves, (ii) we introduce a novel cutoff function technique to address numerical challenges associated with the admissibility analysis for superlinear boundary conditions, and (iii) we implement robust continuation strategies that successfully capture complete bifurcation structures including both subcritical and supercritical bifurcation (see \cite[Thm. 4.4]{BCDMP_2024}). We provide numerical validation for both single equations and coupled systems, demonstrating the transition from subcritical to supercritical bifurcations and confirming theoretical predictions regarding solution multiplicity and global bifurcation structure. Our computational framework overcomes some of the inherent numerical difficulties associated with superlinear problems while providing quantitative validation of the theoretical predictions established in \cite{BCDMP_2024, BCDMP_Systems}.  The principle eigenvalue analysis also helps benchmark the performance of our methods for approximating the bifurcation point.

The remainder of this paper is organized as follows. In Section \ref{sec:FD}, we establish the finite difference formulation and discrete operators used to approximate the differential equations and boundary conditions.  We prove admissibility results for the finite difference method and derive qualitative properties of the discrete solutions. Section \ref{sec:single-equation} focuses on the single equation case, where we present the coding algorithm for computing the one-dimensional bifurcation diagrams, derive the principal eigenvalue for the linearized problem, and compute bifurcation diagrams that validate the analytical predictions from \cite{BCDMP_2024} regarding bifurcation from infinity and multiplicity of positive solutions. Section \ref{sec:coupled-systems} extends our analysis to the coupled systems case, developing the corresponding numerical framework with principal eigenvalue analysis for the linearized system and computing bifurcation diagrams that confirm the richer bifurcation structure predicted by the analysis in \cite{BCDMP_Systems}.

\section{Finite Difference Approximation}
\label{sec:FD}
In this section we formulate our finite difference (FD) methods for approximating solutions to \eqref{pde_lambda} and derive various properties.  
The main idea in the FD formulation is to approximate all differential operators by discrete operators using difference quotients. We first discretize the domain.  Then, at each of the grid points, we approximate the value of the solution by solving the algebraic system of equations that results from replacing the differential operators with discrete difference operators.
The grid functions produced by the FD method serve as approximations for the underlying PDE solutions.  

\section{Finite Difference Approximation}
\label{sec:FD}
In this section we formulate our finite difference (FD) methods for approximating solutions to \eqref{pde_lambda} and derive various properties.  
The main idea in the FD formulation is to approximate all differential operators by discrete operators using difference quotients. We first discretize the domain.  Then, at each of the grid points, we approximate the value of the solution by solving the algebraic system of equations that results from replacing the differential operators with discrete difference operators.
The grid functions produced by the FD method serve as approximations for the underlying PDE solutions.  

\subsection{Difference Operators and Notation}
\label{sec:FD_operators}
Assume the domain $\Omega \subset \mathbb{R}^N$ is an $N$-rectangle. In other words, $\Omega = (a_1,b_1)\times (a_2,b_2)\times \cdots (a_N,b_N)$. Let $m_i \geq 4$ be a positive integer and $h_i=\frac{b_i-a_i}{m_i-1}$ for $i=1,2,\ldots, N$. Define ${h}=(h_1,h_2,\cdots, h_N) \in \mathbb{R}^N$, $M=\prod_{i=1}^{N}(m_i)$, and $\mathbb{N}_M^N=\{{\alpha}=(\alpha_1,\alpha_2, \ldots \alpha_N)\mid 1\leq \alpha_i\leq m_i, i=1,2, \ldots, N\}$. Next we partition $\Omega$ into $\prod_{i=1}^N(m_i-1)$ sub-$N$ rectangles with grid points ${x_{\alpha}}=(a_1+(\alpha_1-1)h_1, a_2+(\alpha_2-1)h_2, \cdots, a_N+(\alpha_N-1)h_N)$ for each multi-index $\alpha \in \mathbb{N}_M^N$. We call $\mathcal{T}_{{h}}=\{{x}_{\alpha}\}_{\alpha \in \mathbb{N}_M^N}$ a grid for $\overline{\Omega}$.
We let $h^* = \max_{i=1}^N h_i$ and $h_* = \min_{i=1}^N h_i$. 

Let $\{e_i\}_{i=1}^N$ denote the canonical basis vectors for $\mathbb{R}^N$. We define the discrete operators for approximating first order partial derivatives 
$\frac{\partial}{\partial x_i} u(x)$ by
\begin{equation}\label{cdiff}
\left\{ \begin{array}{rcll}
\delta^+_{x_i,h_i}u({x}) & := &\frac{u({x}+h_ie_i)-u({x})}{h_i},\\
\delta^-_{x_i,h_i}u({x}) &:= &\frac{u({x})-u({x}-h_ie_i)}{h_i},\\
\delta_{x_i,h_i}u({x}) &:= &\frac12 \delta^+_{x_i, h_i} u(x) + \frac12 \delta^-_{x_i, h_i} u(x) & = \frac{u({x}+h_i e_i)-u({x}-h_ie_i)}{2h_i}
 \end{array}\right.
\end{equation} 
for the function $u:\mathbb{R}^N \to \mathbb{R}$ and 
\begin{equation}\label{grid}
\left\{ \begin{array}{rcl}
\delta^+_{x_i,h_i}u_{h}(x_{\alpha})
&:=&\frac{u_{h}(x_{\alpha+e_i})-u_{h}(x_{\alpha})}{h_i},\\
\delta^-_{x_i,h_i}u_{h}(x_{\alpha}) &:=&\frac{u_{h}(x_{\alpha})-u_{h}(x_{\alpha-e_i})}{h_i},\\
\delta_{x_i,h_i}u_h({x_{\alpha}}) &:= &\frac12 \delta^+_{x_i, h_i} u_h(x_{\alpha}) + \frac12 \delta^-_{x_i, h_i} u_h(x_{\alpha}) \\ &\,=& \frac{u_{h}(x_{\alpha+e_i})-u_{h}(x_{\alpha-e_i})}{2h_i}
 \end{array}\right. 
\end{equation}  for all $x_{\alpha} \in \mathcal{T}_h \cap \Omega$ for the grid function $u_h:\mathcal{T}_h\to \mathbb{R}$. Note that the discrete operators $\delta^\pm_{x_i,h_i}$ are first-order accurate whereas $\delta_{x_i,h_i}$ is second-order accurate. 
We also define the corresponding discrete gradient operators 
\[
[\nabla^\pm_h]_i  := \delta^\pm_{x_i, h_i}  , \qquad 
[\nabla_h]_i := \delta_{x_i, h_i}.
\]
Let $\widetilde{\partial \Omega} \subset \partial \Omega $  such that $\widetilde{\partial \Omega}:= \p\Om \setminus \{\mbox{the points where }\partial \Om \mbox{ is not smooth}\}$. For $x_\alpha \in \mathcal{T}_h \cap \widetilde{\partial \Omega}$, we define the discrete outward normal derivative operator using the discrete gradient operator $\nabla^*_{h}$ defined by 
\[
[\nabla^*_h u(x_\alpha)]_i \cdot \widehat{n}(x_\alpha) := \begin{cases}
\delta_{x_i, h_i}^+ u(x_\alpha)& \text{if } \widehat{n}_i(x_\alpha) < 0 , \\ 
\delta_{x_i, h_i}^- u(x_\alpha)& \text{if } \widehat{n}_i(x_\alpha) > 0 , \\ 
\delta_{x_i, h_i} u(x_\alpha) & \text{if } \widehat{n}_i(x_\alpha) = 0 . 
\end{cases}
\]
The discrete outward normal derivative operator ensures that $\nabla^*_h u \cdot \widehat{n}$ does not require points outside of the domain $\overline{\Om}$. 
Note that the discrete outward normal derivative approximation is only first order accurate.  
\par Next, we define the second order central difference operators for approximating second order nonmixed partial derivatives $\frac{\partial^2}{\partial x_i^2}u(x)$ by
\begin{equation}\label{cdiff2}
 \begin{array}{rcl}
\delta^2_{x_i,h_i}u({x}):= \delta^\pm_{x_i,h_i}(\delta^\mp_{x_i,h_i}(u(x)))=\frac{u({x}+h_ie_i)-2u({x})+u({x}-h_ie_i)}{h_i^2}
\end{array}
\end{equation}
for the function $u:\mathbb{R}^N \to \mathbb{R}$ and 
\begin{equation}\label{grid2}
\delta^2_{x_i,h_i}u_{h}({x}_\alpha) :=\frac{u_{h}({x}_{\alpha+e_i})-2u_{h}({x}_\alpha)+u_{h}({x}_{\alpha-e_i})}{h_i^2}
\end{equation} for all $x_{\alpha} \in \mathcal{T}_h \cap \Omega$ for the grid function $u_h:\mathcal{T}_h \to \mathbb{R}$.
Finally, we define the second order discrete Laplacian operator  $\Delta_{{h}}$ by $$ \Delta_{{h}} := \sum_{i=1}^N\delta^2_{x_i,h_i}.$$

\subsection{Formulation and Analysis for the Single Equation Case}
\label{sec:FD_formulation}
We use the following discrete problem to approximate solutions to \eqref{pde_lambda}, 
where the grid function $u_{h} : \mathcal{T}_h \to \mathbb{R}$ is an approximation for the PDE solution $u$ 
over the grid $\mathcal{T}_h$:
\begin{equation}\label{d_sol_single}
    \left\lbrace\begin{array}{rcll}
    -\Delta_{h} u_{h}+ u_{h}&=&0 & \mbox{ in } \mathcal{T}_h \cap \Omega \,, \\
    \nabla_h^* u_{h}  \cdot \widehat{n} - \lambda f(u_{h}  ) &=& 0 & \mbox{ on } \mathcal{T}_h \cap \widetilde{\partial \Omega} \,. 
    \end{array}\right.
\end{equation}
We characterize when \eqref{d_sol_single} is guaranteed to have a nonnegative solution 
while allowing for the existence of multiple solutions.  
To this end, we apply the Schauder Fixed Point Theorem to a modified version of \eqref{d_sol_single} 
where we apply cutoffs for the nonlinearity on the boundary.  
When applying the methods, the cutoffs can be chosen such that the solution to the modified problem agrees with the solution to \eqref{d_sol_single} based on an {\it a priori} bound. The admissibility analysis using sub- and supersolution techniques with cutoff operators builds upon recent developments for non-monotone finite difference methods in \cite{Lewis_Xue_2025}, although our framework must account for the specific challenges posed by superlinear boundary nonlinearities.
We will also characterize qualitative aspects of solutions to \eqref{d_sol_single} as well as its modified version.  

Choose $\rho, K > 0$ with $\rho < \frac{K}{h_*}$, 
and define the function $\widetilde{f}: [0,\infty) \to \left[ \frac{\rho}{\lambda},\frac{K}{\lambda h^*}\right]$ by 
\begin{equation}\label{f_tilde}
    \widetilde{f}(s) := \begin{cases}
        \frac{K}{\lambda h^*} & \text{if } f(s) > \frac{K}{\lambda h^*} , \\ 
        \frac{\rho}{\lambda} & \text{if } f(s) < \frac{\rho}{\lambda} , \\ 
        f(s) & \text{otherwise}. 
    \end{cases}
\end{equation}
We prove the following theorem that guarantees the admissibility of a modified version of \eqref{d_sol_single} with $f$ replaced by $\widetilde{f}$.
\begin{lem}\label{thm:admissibile}
There exists a nonnegative grid function $\widetilde{u}_h : \mathcal{T}_h \to \mathbb{R}$ such that 
\begin{equation}\label{d_sol_single_tilde}
    \left\lbrace\begin{array}{rcll}
    -\Delta_{h} \widetilde{u}_{h}+ \widetilde{u}_{h}&=&0 & \text{ in } \mathcal{T}_h \cap \Omega \,, \\
    \nabla_h^* \widetilde{u}_{h}  \cdot \widehat{n} - \lambda \widetilde{f}(\widetilde{u}_{h}  ) &=& 0 & \text{ on } \mathcal{T}_h \cap \widetilde{\partial \Omega} . 
    \end{array}\right.
\end{equation}    
\end{lem}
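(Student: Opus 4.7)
The plan is to invoke Schauder's Fixed Point Theorem on a map obtained by freezing the boundary nonlinearity. For any grid function $v_h : \mathcal{T}_h \to \R$, let $T(v_h) = u_h$ be the solution of the linear discrete problem
\begin{align*}
-\De_h u_h + u_h &= 0 \quad \text{in } \mathcal{T}_h \cap \Om, \\
\nabla^*_h u_h \cdot \widehat{n} &= \la \widetilde{f}(v_h) \quad \text{on } \mathcal{T}_h \cap \widetilde{\p\Om},
\end{align*}
so that a fixed point $u_h = T(u_h)$ is exactly a solution of \eqref{d_sol_single_tilde}. Because the discrete outward normal operator $\nabla^*_h$ was constructed to always use the one-sided difference pointing into $\Om$, the assembled matrix has nonpositive off-diagonals and positive diagonal entries; the additive $+I$ on interior rows and the one-sided differencing on boundary rows together force strict diagonal dominance. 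The system is therefore governed by an M-matrix, so $T$ is well defined, and a discrete weak maximum principle guarantees $T(v_h) \geq 0$ whenever $\la \widetilde{f}(v_h) \geq 0$, which holds by the definition of $\widetilde{f}$.

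Next I would derive a uniform upper bound on $T(v_h)$. Since $\la\widetilde{f} \leq K/h^*$ by construction, linearity together with the discrete comparison principle implies $T(v_h) \leq \Phi_h$ pointwise, where $\Phi_h$ is the solution of the same linear problem with constant boundary data $K/h^*$. Setting $M := \max_{\mathcal{T}_h}\Phi_h$ produces the closed convex set
\[
\mathcal{K} := \{w_h : \mathcal{T}_h \to [0,M]\}
\]
that $T$ maps into itself. Continuity of $T$ on $\mathcal{K}$ follows from continuity of $\widetilde{f}$ (the three pieces of its definition agree at each transition value because $f$ is continuous) combined with continuous dependence of solutions of a nonsingular linear system on their right-hand side. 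Since $\mathcal{K}$ is a compact convex subset of the finite-dimensional space of grid functions, the Schauder Fixed Point Theorem (or simply Brouwer's Theorem) yields a fixed point $\widetilde{u}_h \in \mathcal{K}$, which is the desired nonnegative solution of \eqref{d_sol_single_tilde}.

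The step I expect to be the main obstacle is verifying the discrete M-matrix property, and the accompanying weak maximum principle, along the boundary rows, since $\nabla^*_h \cdot \widehat{n}$ is assembled from a mixture of forward, backward, and centered differences depending on the sign of each component $\widehat{n}_i$. The cleanest way to handle this is to observe that at any boundary grid point $x_\al$ the chosen one-sided differences contribute a strictly positive coefficient to $u_h(x_\al)$ and nonpositive coefficients to its in-domain neighbors, while centered differences in any tangential direction (those $i$ with $\widehat{n}_i = 0$) contribute in symmetric $\pm$ pairs that preserve the required sign pattern. Combined with the $+I$ term, this gives an irreducibly diagonally dominant M-matrix, from which invertibility, nonnegativity of the inverse, and the comparison principle used for the supersolution bound all follow by standard arguments, after which the lemma reduces to a direct application of Schauder's theorem.
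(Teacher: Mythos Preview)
Your approach is essentially the paper's: freeze the boundary nonlinearity to define a fixed-point map, verify the linear system is governed by a nonsingular M-matrix, trap the image in an order interval $[0,\text{barrier}]$, and apply Schauder. The only substantive difference is the upper barrier---the paper writes down an explicit piecewise-constant supersolution ($M_K=2NK/h_*^2$ at interior nodes, $M_K+K$ on the boundary) and checks the inequalities by hand, whereas you take $\Phi_h$ to be the solution of the linear problem with maximal boundary data $K/h^*$ and invoke comparison; both choices yield the same invariant-set conclusion, and your irreducible diagonal-dominance argument for the M-matrix property is a valid alternative to the paper's Gershgorin-plus-nullspace reasoning.
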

\begin{proof}
Let $M_K > 0$ be defined by $M_K := 2 N \frac{K}{h_*^2}$.  
Define the grid function $\overline{u}_h : \mathcal{T}_h \to \mathbb{R}$ by 
\begin{equation}
    \overline{u}_h(x_\alpha) := 
    \begin{cases}
        M_K & \text{if } x_\alpha \in \mathcal{T}_h \cap \Omega , \\ 
        M_K + K & \text{if } x_\alpha \in \mathcal{T}_h \cap \partial \Omega . 
    \end{cases}
\end{equation}
Define the subgrid $\mathring{\mathcal{T}}_h$ by 
\[
    \mathring{\mathcal{T}}_h := \left\{ x_\alpha \in \mathcal{T}_h \mid 
    x_{\alpha \pm \mathbf{e}_i} \notin \partial \Omega \text{ for all } i=1,2,\ldots,N \right\} . 
\]
Then, there holds 
\begin{align*}
    -\Delta_h \overline{u}_h(x_\alpha) + \overline{u}_h(x_\alpha) 
    & = 0 + M_K \\ 
    & = M_K \geq 0 
\end{align*}
for all $x_\alpha \in \mathring{\mathcal{T}}_h$.  
Choose $x_\alpha \in (\mathcal{T}_h \setminus \mathring{\mathcal{T}}_h) \cap \Omega$, 
and define 
\[
    E_i^\pm (x_\alpha) := 
    \begin{cases}
        1 & \text{if } x_{\alpha \pm {e}_i} \in \partial \Omega , \\ 
        0 & \text{otherwise} 
    \end{cases}
\]
for all $i = 1,2,\ldots,N$.  
Then 
\begin{align*}
    -\Delta_h \overline{u}_h(x_\alpha) + \overline{u}_h(x_\alpha)
    & = \sum_{i=1}^N E_i^+(x_\alpha) \frac{M_K - (M_K+K)}{h_i^2} + \sum_{i=1}^N E_i^-(x_\alpha) \frac{M_K - (M_K+K)}{h_i^2} 
        + M_K \\ 
    & \geq - 2N \frac{K}{h_*^2} + M_K = 0 . 
\end{align*}
Lastly, for $x_\alpha \in \mathcal{T}_h \cap \widetilde{\partial \Omega}$, there holds 
\begin{align*}
    \nabla_h^* \overline{u}_{h}  \cdot \widehat{n} - \lambda \widetilde{f}(\overline{u}_{h})
    & = \frac{K}{h_i} - \lambda \widetilde{f}(M_K+K) \\ 
    & \geq \frac{K}{h^*} - \lambda \frac{K}{\lambda h^*} = 0  
\end{align*}
for some $i \in \{1,2,\ldots,N\}$.  
Hence, $\overline{u}_h$ is a positive supersolution of \eqref{d_sol_single_tilde}.  
Next, we define the grid function $\underline{u}_h : \mathcal{T}_h \to \mathbb{R}$ by 
\begin{equation}
    \underline{u}_h(x_\alpha) := 0 
\end{equation}
for all $x_\alpha \in \mathcal{T}_h$.  
Then, 
\begin{align*}
    -\Delta_h \underline{u}_h(x_\alpha) + \underline{u}_h(x_\alpha) 
    & = 0 
\end{align*}
for all $x_\alpha \in \mathcal{T}_h \cap \Omega$ 
and 
\begin{align*}
    \nabla_h^* \underline{u}_{h}  \cdot \widehat{n} - \lambda \widetilde{f}(\underline{u}_{h})
    & = - \lambda \widetilde{f}(0) \\ 
    & \leq - \rho < 0 
\end{align*}
for all $x_\alpha \in \mathcal{T}_h \cap \partial \Omega$.  
Hence, $\underline{u}_h$ is a nonnegative subsolution of \eqref{d_sol_single_tilde} 
with $\underline{u}_h$ not an actual solution due to the cutoff $\rho$ for $\widetilde{f}$.  
Furthermore, $\underline{u}_h \leq \overline{u}_h$.  

Let $S(\mathcal{T}_h)$ denote the space of all grid functions, 
and define the nonempty convex subspace $\overline{\underline{S}}(\mathcal{T}_h)$ by 
\[
    \overline{\underline{S}}(\mathcal{T}_h) := \left\{ v_h : \mathcal{T}_h \to \mathbb{R} \mid 
    \underline{u}_h \leq v_h \leq \overline{u}_h \right\} . 
\]
We use the Schauder fixed point theorem to show that \eqref{d_sol_single_tilde} has a solution in the space 
$\overline{\underline{S}}(\mathcal{T}_h)$ from which the result follows.  
Thus, \eqref{d_sol_single_tilde} has a nonnegative solution.  
Define the mapping $\mathcal{M} : S(\mathcal{T}_h) \to S(\mathcal{T}_h)$ by 
$w_h := \mathcal{M} v_h$ if 
\begin{equation}\label{cM_mapping}
    \left\lbrace\begin{array}{rcll}
    -\Delta_{h} w_{h}+ w_{h}&=&0 & \mbox{ in } \mathcal{T}_h \cap \Omega \,, \\
    \nabla_h^* w_{h}  \cdot \widehat{n} &=& \lambda \widetilde{f}(v_{h}  ) & \mbox{ on } \mathcal{T}_h \cap \widetilde{\partial \Omega} 
    \end{array}\right.
\end{equation}
for $v_h \in S(\mathcal{T}_h)$. 
Clearly a fixed point of $\mathcal{M}$ is a solution to \eqref{d_sol_single_tilde}.  

Let $n = | \mathcal{T}_h \cap (\Omega \cup \widetilde{\partial \Omega}) |$. 
Then, there exists a matrix $A \in \mathbb{R}^{n \times n}$ and a vector $\vec{b} \in \mathbb{R}^n$ such that 
\eqref{cM_mapping} is equivalent to solving $A \vec{w} = \vec{b}$, 
where $\vec{w}$ is the vectorization of $w_h$ 
and $\vec{b}$ depends on $\vec{v}$, the vectorization of $v_h$.  
By the choice of the difference operator $\nabla_h^*$ used on the boundary 
and the form of the equations corresponding to $-\Delta_h w_h + w_h$, 
we have the matrix $A$ is a symmetric Z-matrix.  
By Gershgorin's circle theorem, all of the eigenvalues of $A$ are nonnegative.  
Since constant functions are not in the nullspace of $A$, we actually have the matrix must be 
nonsingular using properties of the corresponding discretization of Poisson's equation with Neumann boundary conditions.  
Therefore, $A$ is a nonsingular M-matrix.  
It follows that $w_h$ is uniquely defined by $v_h$ in \eqref{cM_mapping} and that the mapping $\mathcal{M}$ is well-defined.  

Suppose $v_h \in \overline{\underline{S}}(\mathcal{T}_h)$, and let $w_h = \mathcal{M} v_h$.  
We show $w_h \in \overline{\underline{S}}(\mathcal{T}_h)$ 
from which it follows that $\mathcal{M}$ has a fixed point in the space $\overline{\underline{S}}(\mathcal{T}_h)$.  
Observe that 
\begin{align*}
    -\Delta_h \underline{u}_h(x_\alpha) + \underline{u}_h(x_\alpha) 
    & = 0 
    = -\Delta_{h} w_{h}+ w_{h} 
    \leq -\Delta_h \overline{u}_h(x_\alpha) + \overline{u}_h(x_\alpha) 
\end{align*}
for all $x_\alpha \in \mathcal{T}_h \cap \Omega$ 
and 
\begin{align*}
    \nabla_h^* \underline{u}_{h}  \cdot \widehat{n} 
    & = 0 
    \leq \lambda \widetilde{f}(v_h) 
    = \nabla_h^* w_{h}  \cdot \widehat{n}  
\end{align*}
with
\begin{align*}
    \lambda \widetilde{f}(v_h) 
    & \leq \lambda \frac{K}{\lambda h^*} 
    \leq \frac{K}{h_i} 
    = \nabla_h^* \overline{u}_{h}  \cdot \widehat{n} 
\end{align*}
for some $i \in \{1,2,\ldots,N\}$ 
for all $x_\alpha \in \mathcal{T}_h \cap \widetilde{\partial \Omega}$.  
Letting $\underline{U}, \vec{w}, \overline{U} \in \mathbb{R}^n$ denote the vectorizations of 
$\underline{u}_h, w_h, \overline{u}_h$, respectively, 
it follows that 
\[
    A \underline{U} \leq A \vec{w} \leq A \overline{U}  
\]
using the natural partial ordering for vectors.  
Since $A$ is an M-matrix, there holds $\underline{U} \leq \vec{w} \leq \overline{U}$, 
and we can conclude $w_h \in \overline{\underline{S}}(\mathcal{T}_h)$.  
\end{proof}

We now explore qualitative properties of any nonnegative solution to \eqref{d_sol_single_tilde}.  
In particular, we show that no solution can be zero-valued at any node in the grid $\mathcal{T}_h \cap (\Omega \cup \widetilde{\partial \Omega})$ and any solution must have a maximum value on the boundary.  

\begin{lem}[Positivity of Solutions]
\label{lem:positivity}
Let $u_h$ be a nonnegative solution to the modified problem \eqref{d_sol_single_tilde}. Then $u_h > 0$ on $\mathcal{T}_h \cap (\Omega \cup \widetilde{\partial \Omega})$.
\end{lem}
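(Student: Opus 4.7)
The plan is to combine a discrete strong maximum principle for the interior equation with a discrete Hopf-type boundary argument that exploits the lower cutoff built into \eqref{f_tilde}. First, I would dispose of the boundary case by contradiction: suppose $x_\alpha \in \mathcal{T}_h \cap \widetilde{\partial \Omega}$ satisfies $u_h(x_\alpha) = 0$. The boundary condition in \eqref{d_sol_single_tilde} then forces $\nabla_h^* u_h(x_\alpha) \cdot \widehat{n}(x_\alpha) = \lambda \widetilde{f}(0) \geq \rho > 0$ thanks to the floor in \eqref{f_tilde}. On the other hand, each component of $\nabla_h^* u_h(x_\alpha)$ is a one-sided difference that only reaches into $\overline{\Omega}$: for $\widehat{n}_i(x_\alpha) > 0$ one has $[\nabla_h^* u_h(x_\alpha)]_i = -u_h(x_{\alpha - e_i})/h_i \leq 0$, while for $\widehat{n}_i(x_\alpha) < 0$ one has $[\nabla_h^* u_h(x_\alpha)]_i = u_h(x_{\alpha + e_i})/h_i \geq 0$. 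Multiplying by the matching sign of $\widehat{n}_i$ makes every summand of the dot product nonpositive, producing the desired contradiction with $\rho > 0$.

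Next, I would handle zero interior values via a discrete strong maximum principle. If $x_\alpha \in \mathcal{T}_h \cap \Omega$ has $u_h(x_\alpha) = 0$, solving the interior equation for the center value yields
\[
\left( 1 + 2 \sum_{i=1}^N \frac{1}{h_i^2} \right) u_h(x_\alpha) = \sum_{i=1}^N \frac{u_h(x_{\alpha + e_i}) + u_h(x_{\alpha - e_i})}{h_i^2},
\]
and the assumed nonnegativity of $u_h$ then forces every term on the right to vanish, so $u_h(x_{\alpha \pm e_i}) = 0$ for every $i = 1,\dots,N$. This propagation of zeros to all discrete neighbors is the engine driving the rest of the argument.

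Finally, I would iterate the propagation out to a smooth boundary node. Since $\Omega$ is an $N$-rectangle and $x_\alpha$ is an interior grid point, its multi-index satisfies $2 \leq \alpha_i \leq m_i - 1$ for each $i$; stepping repeatedly in, say, the $+e_1$ direction leaves $2 \leq \alpha_i \leq m_i - 1$ for $i \geq 2$ untouched, so the walk must terminate after finitely many steps at a node whose first coordinate equals $b_1$ and whose other coordinates still lie strictly between $a_j$ and $b_j$. Such a terminal node lies on the relative interior of a single face and hence in $\widetilde{\partial \Omega}$; crucially, corners and lower-dimensional edges can never be reached by a single-axis walk starting from an interior grid point. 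At every visited node the propagation step yields $u_h = 0$, so the terminal node produces a boundary zero, contradicting the first step. The subtle point of the whole argument is the Hopf-type contradiction on the boundary: it depends essentially on the strict inequality $\widetilde{f}(0) \geq \rho/\lambda > 0$ supplied by \eqref{f_tilde} and would fail for the original $f$ whenever $f(0) = 0$, which is exactly the regime treated in \cite{BCDMP_2024}.
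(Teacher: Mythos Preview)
Your proof is correct and follows essentially the same approach as the paper: both rule out boundary zeros by a discrete Hopf-type argument exploiting $\widetilde{f}(0) \geq \rho/\lambda > 0$, and then propagate any interior zero to the boundary via the interior stencil identity $-\Delta_h u_h + u_h = 0$. The only cosmetic difference is that the paper iterates the neighbor argument to conclude $u_h \equiv 0$ on the whole grid before invoking the boundary contradiction, whereas you walk along a single coordinate axis to land on a smooth face in $\widetilde{\partial\Omega}$; your version is slightly more explicit about why corners are avoided, but the substance is the same.
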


\begin{proof}
Let $u_h$ be a nonnegative solution to \eqref{d_sol_single_tilde}. 
Suppose $u_h(x_\alpha) = 0$ for some $x_\alpha \in \mathcal{T}_h \cap (\Omega \cup \widetilde{\partial \Omega})$.  
First, suppose $x_\alpha \in \mathcal{T}_h \cap \widetilde{\partial \Omega}$.  
Then, 
$\nabla_h^* u_h(x_\alpha) \cdot \widehat{n} = \lambda \widetilde{f}(u_h(x_\alpha)) = \lambda \widetilde{f}(0) \geq \rho > 0$.  
Thus, there exists $i \in \{1,2,\ldots,N\}$ such that 
$\delta_{x_i, h_i}^+ u_h(x_\alpha) > 0$ or $\delta_{x_i, h_i}^- u_h(x_\alpha) > 0$, and it follows that 
$u_h(x_{\alpha+e_i}) < u_h(x_\alpha) = 0$ or $u_h(x_{\alpha-e_i}) < u_h(x_\alpha) = 0$, 
a contradiction.  
Thus, we must have $x_\alpha \in \mathcal{T}_h \cap \Omega$.  

Observe that $0 = u_h(x_\alpha) - \Delta_h u_h(x_\alpha) = -\Delta_h u_h(x_\alpha)$ implies 
\[
    0 = \sum_{i=1}^N \frac{2}{h_i^2} u_h(x_\alpha) = 
    \sum_{i=1}^N \frac{1}{h_i^2} \big( u_h(x_{\alpha-\mathbf{e}_i}) + u_h(x_{\alpha+\mathbf{e}_i}) \big) 
\]
with $u_h(x_{\alpha\pm\mathbf{e}_i}) \geq 0$ for all $i=1,2,\ldots,N$.  
Thus, we must have $u_h(x_{\alpha\pm\mathbf{e}_i}) = 0$ for all $i= 1,2,\ldots,N$.  
Repeating this argument, we have $u_h(x_{\alpha}) = 0$ for all 
$x_\alpha \in \mathcal{T}_h \cap (\Omega \cup \widetilde{\partial \Omega})$, 
a contradiction to the fact $u_h$ cannot be zero-valued on $\mathcal{T}_h \cap \widetilde{\partial \Omega}$.  
Therefore, we must have $u_h$ is positive over $\mathcal{T}_h \cap (\Omega \cup \widetilde{\partial \Omega})$.  
\end{proof}

\begin{lem}[Discrete Maximum Principle]
\label{lem:discrete_max_principle}
Let $u_h$ be a nonnegative solution to the modified problem \eqref{d_sol_single_tilde}. Then $u_h : \mathcal{T}_h \cap (\Omega \cup \widetilde{\partial \Omega}) \to \mathbb{R}$ achieves its maximum value on the boundary $\mathcal{T}_h \cap \widetilde{\partial \Omega}$.
\end{lem}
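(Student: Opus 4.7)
The plan is to argue by contradiction: suppose the maximum $M$ of $u_h$ over $\mathcal{T}_h \cap (\Omega \cup \widetilde{\partial \Omega})$ is attained at some interior grid point $x_\alpha \in \mathcal{T}_h \cap \Omega$, and derive an inconsistency from the discrete equation $-\Delta_h u_h + u_h = 0$ combined with Lemma \ref{lem:positivity}. Because $\Omega$ is an $N$-rectangle and $x_\alpha$ is strictly interior (all components of the multi-index satisfy $1 < \alpha_i < m_i$), each of the $2N$ nearest neighbors $x_{\alpha \pm e_i}$ lies either in the interior of $\Omega$ or on a smooth face of $\partial\Omega$, never at a corner. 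Hence $x_{\alpha \pm e_i} \in \mathcal{T}_h \cap (\Omega \cup \widetilde{\partial \Omega})$, and so $u_h(x_{\alpha \pm e_i}) \leq M$ by maximality.

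At the interior node $x_\alpha$, the discrete equation rearranges to
\[
    (1+S)\, u_h(x_\alpha) = \sum_{i=1}^N \frac{u_h(x_{\alpha+e_i}) + u_h(x_{\alpha-e_i})}{h_i^2},
\]
where $S := \sum_{i=1}^N \tfrac{2}{h_i^2}$. Substituting $u_h(x_\alpha) = M$ and bounding each neighbor term by $M$ produces $(1+S)M \leq SM$, whence $M \leq 0$. Since $u_h \geq 0$, this forces $M = 0$; but Lemma \ref{lem:positivity} gives $u_h > 0$ on $\mathcal{T}_h \cap (\Omega \cup \widetilde{\partial \Omega})$, so $M > 0$, a contradiction. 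Therefore the maximum cannot be attained at any interior node, and must instead be attained somewhere in $\mathcal{T}_h \cap \widetilde{\partial \Omega}$.

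The step that takes a little care, rather than being a real obstacle, is the geometric observation that an interior node's nearest neighbors all lie in $\mathcal{T}_h \cap (\Omega \cup \widetilde{\partial \Omega})$; this is what lets one compare them with $M$ without worrying about the unconstrained or non-smooth corner values. It is also worth noting that the zeroth-order term $+u_h$ in the PDE is essential to this short argument: it makes the bound strict ($1+S > S$ rather than $S \leq S$) and thereby bypasses the connectedness/propagation argument that a maximum principle for the pure discrete Laplacian would require.
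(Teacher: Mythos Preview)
Your proof is correct and follows essentially the same route as the paper: assume the maximum is attained at an interior node, combine the interior equation $-\Delta_h u_h + u_h = 0$ with the bound $u_h(x_{\alpha\pm e_i}) \leq M$ to force $M \leq 0$, and then invoke Lemma~\ref{lem:positivity} for the contradiction. The paper organizes the inequalities slightly differently (first using positivity to get $-\Delta_h u_h(x_\alpha) < 0$, then the neighbor bound), but the content is identical; your explicit remark that interior neighbors never land at non-smooth boundary points is a welcome clarification the paper leaves implicit.
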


\begin{proof}
Let $u_h$ be a nonnegative solution to \eqref{d_sol_single_tilde}, and suppose $u_h$ is maximized at 
$x_\alpha \in \mathcal{T}_h \cap \Omega$.  
Then, $u_h(x_\alpha)$ must be positive, and it follows that 
$-\Delta_h u_h(x_\alpha) = - u_h(x_\alpha) < 0$.  
Thus, 
\[
    \sum_{i=1}^N \frac{2}{h_i^2} u_h(x_\alpha) 
    < \sum_{i=1}^N \frac{1}{h_i^2} \big( u_h(x_{\alpha-\mathbf{e}_i}) + u_h(x_{\alpha+\mathbf{e}_i}) \big) 
    \leq \sum_{i=1}^N \frac{2}{h_i^2} u_h(x_\alpha) , 
\]
a contradiction.  
Therefore, any solution must achieve its maximum value at $x_\alpha \in \mathcal{T}_h \cap \widetilde{\partial \Omega}$.  
\end{proof}

\begin{rem}\label{rem:rhozero}
Observe that results similar to Lemma~\ref{lem:positivity} and Lemma~\ref{lem:discrete_max_principle} hold for any nontrivial solution to \eqref{d_sol_single}. 
If $f(0) = 0$, then the zero-valued function is a trivial nonnegative solution to \eqref{d_sol_single}.  
Thus, Theorem~\ref{thm:admissibile} and the above lemmas can trivially be extended to the case $\rho \geq 0$ when seeking 
nonnegative solutions.  
Using $\rho > 0$ when $f(0) = 0$ eliminates the trivial solution from \eqref{d_sol_single_tilde} 
which can be useful when seeking nontrivial solutions of \eqref{d_sol_single} when they exist 
or deciding no positive solution of \eqref{d_sol_single} exists for a given $\lambda$ value when $f'(0) > 0$.  
\end{rem}

\begin{rem}\label{rem:implications}
The problem \eqref{d_sol_single_tilde} may have multiple solutions 
with one solution being the trivial solution if $f(0) = 0$ and $\rho = 0$.  
By the results in \cite{Lewis_Morris_Zhang_2022}, any convergent subsequence of solutions as $h \to 0^+$ will converge to a solution to the PDE.  
If the scheme has a stable sequence of solutions, then it is guaranteed that a convergent subsequence exists.  
Letting $K \to \infty$, we will eventually have 
$\frac{K}{h^*}$ bounds $f(u)$ and 
$M_K$ bounds $u$ for the maximal positive solution $u$ to the PDE.  
Indeed, we observe this behavior in Sections~\ref{sec:single-equation} and \ref{sec:coupled-systems} where we compute solutions to 
\eqref{d_sol_single} as well as $\| u_h \|_\infty$ vs $\lambda$ bifurcation curves.  
We also derive {\it a priori} stability bounds for any nonnegative solution of \eqref{d_sol_single} that can 
be used in \eqref{d_sol_single_tilde} as characterized in Corollary~\ref{cor:stability} below.  
Note that it is easy to check to see if a solution $u_h$ to \eqref{d_sol_single_tilde} 
is a solution to \eqref{d_sol_single} by simply checking to see if 
$f(u_h(x_\alpha)) = \widetilde{f}(u_h(x_\alpha))$ for all $x_\alpha \in \mathcal{T}_h \cap \widetilde{\partial \Omega}$. 
\end{rem}

We now use the superlinearity of the nonlinear boundary condition to 
derive an {\it a priori} bound for any solution of \eqref{d_sol_single}.  
First note that, by the superlinear and subcritical growth condition of $f$, since $p > 1$ and $\lim_{s \to +\infty} \frac{f(s)}{s^p} = b > 0$, we have $f(s) \sim bs^p$ as $s \to \infty$, which implies $\frac{f(s)}{s} \sim bs^{p-1} \to \infty$ as $s \to \infty$ since $p-1 > 0$.
Consequently, there exists a constant $C>0$ such that 
\begin{equation}\label{a_priori_bound}
    \frac{f(s)}{s} > \frac{1}{\lambda h_*} \qquad \text{for all } s > C . 
\end{equation}

\begin{thm}\label{thm:stability}
Let $C > 0$ be defined by \eqref{a_priori_bound}, and let $u_h$ be a nonnegative solution to \eqref{d_sol_single}.  
Then $\max_{x_\alpha \in \mathcal{T}_h \cap (\Omega \cup \widetilde{\partial \Omega})} u_h (x_\alpha) \leq C$.  
\end{thm}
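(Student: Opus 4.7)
The plan is to combine the discrete maximum principle with the superlinear growth condition encoded in \eqref{a_priori_bound}. The proof proceeds by contradiction: one localizes the maximum of $u_h$ to a smooth boundary node, converts the boundary condition into an upper bound on $f(M)/M$ in terms of $1/(\lambda h_*)$, and then appeals to the superlinearity of $f$ to rule out $M>C$. If $u_h \equiv 0$ the result is immediate since $C>0$, so assume $u_h$ is nontrivial and set $M := \max_{x_\alpha \in \mathcal{T}_h \cap (\Omega \cup \widetilde{\partial \Omega})} u_h(x_\alpha)$. By Lemma~\ref{lem:discrete_max_principle} combined with Remark~\ref{rem:rhozero}, which together extend the discrete maximum principle to nontrivial nonnegative solutions of \eqref{d_sol_single}, there is a node $x_\alpha \in \mathcal{T}_h \cap \widetilde{\partial \Omega}$ at which $u_h(x_\alpha) = M$. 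Lemma~\ref{lem:positivity} (again via Remark~\ref{rem:rhozero}) ensures $M>0$.

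Next I would unpack the discrete outward normal derivative at $x_\alpha$. Because $\Omega$ is an $N$-rectangle and $x_\alpha \in \widetilde{\partial \Omega}$ lies on a single face, exactly one component $\widehat{n}_i(x_\alpha)$ is nonzero with $|\widehat{n}_i(x_\alpha)| = 1$. Writing $x_\beta \in \mathcal{T}_h \cap \overline{\Omega}$ for the unique grid neighbor of $x_\alpha$ in the inward $i$-direction, both branches $\widehat{n}_i = \pm 1$ of the definition of $\nabla_h^*$ collapse to the same scalar expression
\[
\nabla_h^* u_h(x_\alpha)\cdot\widehat{n}(x_\alpha) \;=\; \frac{u_h(x_\alpha) - u_h(x_\beta)}{h_i}.
\]
Substituting this into the discrete boundary condition in \eqref{d_sol_single} yields
\[
\lambda f(M) \;=\; \frac{M - u_h(x_\beta)}{h_i}.
\]
Using $u_h(x_\beta) \geq 0$ (by nonnegativity) and $h_i \geq h_*$, one obtains $\lambda f(M) \leq M/h_*$, and since $M>0$ this is equivalent to
\[
\frac{f(M)}{M} \;\leq\; \frac{1}{\lambda h_*}.
\]
If $M>C$ held, this would directly contradict \eqref{a_priori_bound}, which gives $f(s)/s > 1/(\lambda h_*)$ for every $s>C$. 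Hence $M \leq C$, proving the theorem.

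The genuine mathematical content is the one-line estimate $\lambda f(M)/M \leq 1/h_*$ matched against the superlinear condition; the main obstacle is really notational, namely verifying that at a smooth boundary node on an $N$-rectangle the one-sided stencil of $\nabla_h^*$ produces the clean identity above using only nodes in $\overline{\Omega}$. A subsidiary check worth stating explicitly is that $x_\beta$ is always available and nonnegative, which is what allows the $u_h(x_\beta)$ term to be dropped to obtain the usable inequality. Once these bookkeeping items are in place, the theorem follows from the definition of $C$ without any further machinery.
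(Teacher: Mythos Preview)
Your proof is correct and follows essentially the same approach as the paper: localize the maximum to a boundary node via the discrete maximum principle, use the one-sided boundary stencil together with nonnegativity of the inward neighbor to obtain $f(M)/M \le 1/(\lambda h_*)$, and conclude from \eqref{a_priori_bound}. The only cosmetic difference is that the paper re-derives the maximum-on-the-boundary step inline rather than citing Lemma~\ref{lem:discrete_max_principle} and Remark~\ref{rem:rhozero}, and it phrases the stencil computation as two separate cases $-\delta_{x_i,h_i}^+$ and $\delta_{x_i,h_i}^-$ rather than your unified inward-neighbor formulation.
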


\begin{proof}
The result clearly holds if $u_h(x_\alpha) = 0$ for all $x_\alpha \in \mathcal{T}_h \cap (\Omega \cup \widetilde{\partial \Omega})$.  Suppose 
\[
\max_{x_\alpha \in \mathcal{T}_h \cap (\Omega \cup \widetilde{\partial \Omega})} u_h (x_\alpha) > 0.
\]
Similar to the proof of Lemma~\ref{lem:discrete_max_principle}, $u_h$ must achieve its maximum over $\mathcal{T}_h \cap \widetilde{\partial \Omega}$.  Let $x_\alpha \in \mathcal{T}_h \cap \widetilde{\partial \Omega}$ be the point at which $u_h$ is maximized.  
Then, by the nonlinear boundary condition, 
$\nabla_h^* u_h(x_\alpha) \cdot \widehat{n} = \lambda f( u_h(x_\alpha))$, 
and it follows that there exists an index $i$ such that 
\begin{align*}
- \delta_{x_i, h_i}^+ u_h(x_\alpha) = \lambda  f( u_h(x_\alpha)) \qquad \text{or} \qquad 
\delta_{x_i, h_i}^- u_h(x_\alpha) = \lambda  f( u_h(x_\alpha)) . 
\end{align*}
Observe that, since $u_h$ is nonnegative, 
\begin{align*}
- \delta_{x_i, h_i}^+ u_h(x_\alpha) = \lambda  f( u_h(x_\alpha)) 
& \implies 
f( u_h(x_\alpha)) \leq \frac{1}{\lambda h_i} u_h(x_\alpha) , \\ 
\delta_{x_i, h_i}^- u_h(x_\alpha) = \lambda  f( u_h(x_\alpha)) 
& \implies 
f( u_h(x_\alpha)) \leq \frac{1}{\lambda h_i} u_h(x_\alpha) . 
\end{align*}
Therefore, 
\[
\frac{f( u_h(x_\alpha))}{u_h(x_\alpha)} \leq \frac{1}{\lambda h_*} , 
\]
a contradiction unless $u_h(x_\alpha) \leq C$.  
\end{proof}

\begin{cor}\label{cor:stability}
If a nonnegative solution to \eqref{d_sol_single} exists, then there must be a nonnegative solution $\widetilde{u}_h$ to \eqref{d_sol_single_tilde} 
with $\rho = 0$, $K \geq \frac{C h^*}{h_*}$, and $M_K \geq C$ 
for which $\widetilde{u}_h$ is also a solution to \eqref{d_sol_single}, 
where $C$ is defined by \eqref{a_priori_bound}.  
\end{cor}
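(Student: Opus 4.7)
The plan is to take $\widetilde{u}_h := u_h$ and verify that the given nonnegative solution $u_h$ of \eqref{d_sol_single} already satisfies \eqref{d_sol_single_tilde} once the parameters $\rho$, $K$, $M_K$ are chosen large enough to render the cutoffs in $\widetilde{f}$ inactive on the values that $u_h$ attains. Because the interior equations in the two problems coincide, the only thing to check is that $\widetilde{f}(u_h(x_\beta)) = f(u_h(x_\beta))$ at every $x_\beta \in \mathcal{T}_h \cap \widetilde{\partial \Omega}$. With $\rho = 0$ the lower branch of \eqref{f_tilde} is automatically vacuous (this is precisely the extension to $\rho \geq 0$ that Remark~\ref{rem:rhozero} licenses), so the task reduces to proving $f(u_h(x_\beta)) \leq K/(\lambda h^*)$ pointwise on the boundary.

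The first step is to apply Theorem~\ref{thm:stability} to obtain the uniform bound $u_h(x_\alpha) \leq C$ on $\mathcal{T}_h \cap (\Omega \cup \widetilde{\partial \Omega})$. The second step, which I expect to be the main subtlety, is to upgrade this into a pointwise bound on $f(u_h)$ at every boundary node, not merely at the maximizing one as in the proof of Theorem~\ref{thm:stability}. The key observation is that $\widetilde{\partial \Omega}$ excludes the non-smooth edges and corners of the $N$-rectangle, so at each $x_\beta \in \mathcal{T}_h \cap \widetilde{\partial \Omega}$ the outward normal $\widehat{n}$ has a single nonzero component equal to $\pm 1$, and the discrete boundary condition at $x_\beta$ reduces to
\[
\lambda f(u_h(x_\beta)) \;=\; \nabla_h^* u_h(x_\beta) \cdot \widehat{n} \;=\; \frac{u_h(x_\beta) - u_h(x_{\mathrm{adj}})}{h_i}
\]
for a single index $i$, where $x_{\mathrm{adj}}$ denotes the adjacent interior node in the $\pm e_i$ direction. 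Since $u_h(x_{\mathrm{adj}}) \geq 0$ and $u_h(x_\beta) \leq C$, this yields $f(u_h(x_\beta)) \leq C/(\lambda h_*)$ uniformly on $\mathcal{T}_h \cap \widetilde{\partial \Omega}$.

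Combining these, the hypothesis $K \geq C h^*/h_*$ delivers $K/(\lambda h^*) \geq C/(\lambda h_*) \geq f(u_h(x_\beta))$ at every boundary node, so the upper cutoff branch in $\widetilde{f}$ is also inactive, and $\widetilde{u}_h := u_h$ is a nonnegative solution of \eqref{d_sol_single_tilde} that simultaneously satisfies \eqref{d_sol_single}. The auxiliary condition $M_K \geq C$ is retained for consistency with the construction in the proof of Lemma~\ref{thm:admissibile}: it guarantees $u_h \leq \overline{u}_h$, so $\widetilde{u}_h$ lies in the Schauder range $\overline{\underline{S}}(\mathcal{T}_h)$ used there. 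The only real obstacle is the uniform boundary bound in the second step, which I plan to handle by repeating the discrete-normal-derivative estimate of Theorem~\ref{thm:stability} at arbitrary boundary nodes, now exploiting only the nonnegativity of the adjacent interior value rather than the maximizing property of $u_h$.
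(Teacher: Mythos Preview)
Your proposal is correct and is precisely the intended argument: the paper states Corollary~\ref{cor:stability} without proof, treating it as an immediate consequence of Theorem~\ref{thm:stability}, and your plan of setting $\widetilde{u}_h := u_h$ and verifying that the cutoffs in $\widetilde{f}$ are inactive at every boundary node via the one-sided normal-derivative estimate is exactly how that consequence is meant to be read off. Your second step---repeating the estimate from the proof of Theorem~\ref{thm:stability} at an arbitrary boundary node using only $u_h(x_{\mathrm{adj}}) \geq 0$ and $u_h(x_\beta) \leq C$---is the right way to close the gap, and your remark that $M_K \geq C$ merely places $u_h$ in the sub/supersolution interval $\overline{\underline{S}}(\mathcal{T}_h)$ (rather than being essential to the argument) is accurate.
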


\begin{rem}
\label{rem:limitations}
While Corollary~\ref{cor:stability} establishes a way to search for solutions to \eqref{d_sol_single}, it does not guarantee when a nonnegative solution exists.  Instead, it allows the use of \eqref{d_sol_single_tilde} to search for a solution to \eqref{d_sol_single} where the nonlinearity for \eqref{d_sol_single_tilde} does not have superlinear growth.  The gap is natural in the sense that \eqref{d_sol_single} may not have a positive solution whereas \eqref{d_sol_single_tilde} targets positive solutions.  The {\it a priori} bound also does not ensure numerical stability as $h^* \to 0^+$ despite the fact it is observed experimentally in Sections~\ref{sec:single-equation} and \ref{sec:coupled-systems} when computing bifurcation diagrams.  Assuming numerical stability, the convergence analysis technique used in \cite{Lewis_Morris_Zhang_2022} can be applied to \eqref{d_sol_single} to guarantee the accuracy of the approximations.
The {\it a priori} bound is consistent with the fact positive solutions to the PDE blow-up as $\lambda \to 0^+$.  
\end{rem}

\section{Applications to the Single Equation Case} 
\label{sec:single-equation}

We implement the FD method \eqref{d_sol_single} for several one-dimensional test problems to validate the utility of the method and expand its use for generating approximate bifurcation curves for problem \eqref{pde_lambda}.
From the theoretical results in \cite{BCDMP_2024}, there is a bifurcation of positive solutions from infinity at $\lambda=0$. 
We will also perform an eigenvalue analysis for the one-dimensional problem to help generate an initial guess for directly solving \eqref{d_sol_single} from which we use continuation to find the complete bifurcation curve.
The eigenvalue analysis will further help us benchmark the performance of our methods for capturing when positive solutions bifurcate from the trivial solution when $f(0) = 0$ and $f'(0)>0$.  
Alternatively, we were able to start with a large constant value for the initial guess when solving for a small $\lambda$ value exploiting the fact that positive solutions bifurcate from infinity when $\lambda \to 0^+$.  
The method will be implemented in MATLAB and use the \texttt{fsolve} command to solve the corresponding nonlinear system of algebraic equations.  
We also make sure the minimum of the solution $u_h$ is positive so that we can guarantee \texttt{fsolve} only finds positive solutions when seeking nontrivial solutions. Once we find a solution we plot $\max\{|u_h|\}$ vs $\lambda$ to approximate the  $\|u\|_{\infty}$ vs $\lambda$ bifurcation curve for \eqref{pde_lambda}.

\subsection{Principal Eigenvalue for Single Equation Case}
We first perform a principal eigenvalue analysis for the single equation case when $N=1$ with $\Omega = (0,1)$.  
The analysis will identify the exact value $\lambda_1$ where the $\| u \|_{\infty}$ vs $\lambda$ bifurcation diagram bifurcates  from the trivial solution when $f(0) = 0$ and $f'(0) > 0$.  
Since $\lambda_1 = \frac{\mu_1}{f'(0)}$, where $\mu_1$ denotes the first Steklov eigenvalue, 
we seek to find the principal eigenvalue of the linearized one-dimensional single-equation problem:
\begin{align*}
    \begin{cases}
        ~~~~ -\phi'' + \phi = 0 ~;~ (0,1), \\
        ~~ -\phi'(0) - \lambda f'(0) \phi(0) = 0, \\
        ~~~~ \phi'(1) - \lambda f'(0) \phi(1) = 0.
    \end{cases}
\end{align*}

The general solution to the differential equation has the form:
\begin{align*}
    \phi(x) = A \cosh(x) + B \sinh(x),
\end{align*}
where \(A\) and \(B\) are constants which depend on the boundary conditions. At $x = 0$ we have $\phi(0) = A$ and $\phi'(0) = B$.
It follows that
\begin{align*}
    \phi'(x) &= A \sinh(x) + B \cosh(x).
\end{align*}
Substituting into the first boundary condition:
\begin{align*}
-\phi'(0) - \lambda f'(0) \phi(0)
    &= -B - \lambda f'(0) A 
    = 0 \implies B = -\lambda f'(0) A.
\end{align*}
It follows that
\begin{align*}
    \phi(x) &= A \cosh(x) - 
             \lambda f'(0) A \sinh(x) 
            = A \Big( \cosh(x) - \lambda
              f'(0) \sinh(x) \Big), \\
    \phi'(x) &= A \sinh(x) -
                \lambda f'(0) A \cosh(x)
             = A \Big( \sinh(x) - \lambda 
               f'(0) \cosh(x) \Big).
\end{align*}    
Observe that 
\begin{align*}
\phi(1) 
    &= A \Big( \cosh(1) - \lambda f'(0) 
       \sinh(1) \Big), \\
\phi'(1) 
    &= A \Big( \sinh(1) - \lambda f'(0) \cosh(1) \Big).
\end{align*}

Substituting into the second boundary condition:
\begin{align} \label{solve_for_lambda1}
\phi'(1) - \lambda f'(0) \phi(1)
    &= A \Big( \sinh(1) - \lambda f'(0) \cosh(1) \Big) - \lambda f'(0) \notag 
    A \Big( \cosh(1) - \lambda f'(0) 
       \sinh(1) \Big) \notag \\
    &= A \Bigg( [f'(0)]^2 \sinh(1) \lambda^2  - 2 f'(0) \cosh(1) \lambda 
    + \sinh(1) \Bigg) = 0.
\end{align}
Assuming \(A \neq 0\), we apply the quadratic formula to obtain:
\begin{align*}
\lambda 
    &= \frac{2 f'(0) \cosh(1) \pm \sqrt{[2 f'(0) \cosh(1)]^2 - 4 \cdot [f'(0)]^2 \sinh(1) \cdot \sinh(1)}}{2 [f'(0)]^2 \sinh(1)} \\[2ex]
    &= \frac{2 f'(0) \cosh(1) \pm 
       \sqrt{4 [f'(0)]^2 \cosh^2(1) -
       4 [f'(0)]^2 \sinh^2(1)}}
       {2 [f'(0)]^2 \sinh(1)} \\[2ex]
    &= \frac{2 f'(0) \cosh(1) \pm 
       \sqrt{4 [f'(0)]^2 \Big( \cosh^2(1) -
       \sinh^2(1) \Big)}}
       {2 [f'(0)]^2 \sinh(1)} \\[2ex]
    &= \frac{2 f'(0) \cosh(1) \pm 
       2 f'(0) \sqrt{1}}
       {2 [f'(0)]^2 \sinh(1)} \\[2ex]
    &= \frac{\cosh(1) \pm 1}{f'(0) \sinh(1)}.
\end{align*}

The principal eigenvalue is the smallest \emph{positive} eigenvalue. In all of our choices for $f$, we have $f'(0) > 0$. Furthermore, $\cosh(1) > 1$. Hence, 
\begin{align} \label{lambda1_single}
    \lambda_1 &= \displaystyle \frac{\cosh(1)-1}{f'(0) \sinh(1)}.
\end{align}

Let $\phi_1$ denote the eigenfunction corresponding to $\lambda_1$. We will take $A = 1$ (for convenience) so that $||\phi_1||_{\infty} = 1$. Here we justify that $\phi_1(x) > 0$ on $(0,1)$. Observe that
\begin{align} \label{single_eigenfunction}
\phi_1(x)
    &= \cosh(x) - \lambda_1 f'(0) \sinh(x) \notag \\
    &= \cosh(x) - \Bigg( \frac{\cosh(1)-1}{f'(0) \sinh(1)} \Bigg) f'(0) \sinh(x) \notag \\
    &= \cosh(x) - \Bigg( \frac{\cosh(1)-1}{\sinh(1)} \Bigg) \sinh(x).
\end{align}
Now,
\begin{align*}
\phi_1(x) > 0
    &\iff \cosh(x) - \Bigg( \frac{\cosh(1)-1}{\sinh(1)} \Bigg) \sinh(x) > 0
    \iff \coth(x) > \frac{\cosh(1)-1}{\sinh(1)}.
\end{align*}
Recall that $\coth$ is a decreasing function, so we easily see that for $x \in (0,1]$, we have $\coth(x) 
    \geq \coth(1) 
    = \displaystyle \frac{\cosh(1)}{\sinh(1)} > \frac{\cosh(1)-1}{\sinh(1)}.$

\subsection{Coding Algorithm for One-Dimensional Bifurcation Diagrams}

First we divide our interval $\Omega = (0,1)$ into $M-1$ subintervals with uniform width $h>0$ with $x_1 = 0$ and $x_M = 1$. We approximate  $u(x_i)$ by $u_i$ using the finite difference method to derive a nonlinear algebraic system $F(\vec{u})=\vec{0}$ to define $\vec{u}$. By \eqref{d_sol_single}, the function $F: \mathbb{R}^M \to \mathbb{R}^M$ is defined by $F_1(\vec{u})=\frac{u_1-u_2}{h}-\lambda f (u_1)$, $F_i(\vec{u})=-\frac{1}{h^2}u_{i-1}+(\frac{2}{h^2}+1)u_i-\frac{1}{h^2}u_{i+1}$ for $i \in \{2,3,\ldots , M-1\}$, and $F_M(\vec{u})=\frac{u_M-u_{M-1}}{h}-\lambda f(u_M)$. After we define the function $F$, we use \texttt{fsolve} in MATLAB to solve $F(\vec{u})=\vec{0}$ via the command:
$$u=\text{fsolve}(F,u_0,\text{flags})$$
where $u_0$ is the initial guess and flags sets our tolerances. Once we find a solution we plot $\max\{|\vec{u}|\}$ vs $\lambda$ to approximate the  $\|u\|_{\infty}$ vs $\lambda$ bifurcation curve for \eqref{d_sol_single}.

In the tests, we build the bifurcation curves using continuation on a grid of width $\Delta \lambda = 0.001$ for finer resolution near bifurcation points.  
We use a tolerance of $10^{-6}$ for both the residual tolerance and step tolerance in \texttt{fsolve}.  
A solution at $\lambda$ is used as an initial guess when seeking a solution for $\lambda' = \lambda\pm \Delta \lambda$ 
depending on the direction for constructing the bifurcation curve.  
The initial guess when first solving $F(\vec{u}) = \vec{0}$ for a given $\lambda$ value is taken to be of the form 
$\phi_1(x) = A[\cosh(x) - \frac{\cosh(1)-1}{\sinh(1)} \sinh(x)]$ for a chosen constant $A$ based on the 
principal eigenvalue analysis for the underlying PDE. 
We also ensure any positive solution is bounded below by $10^{-12}$ when seeking to approximate the bifurcation point 
and when determining if a vector corresponds to a valid positive solution of \eqref{d_sol_single}.

\subsection{Bifurcation Diagrams and Approximate Solutions}
We plot solutions of \eqref{d_sol_single} with various $f$ and $\lambda$ values. We see the graphs are concave up with the maximum values along the boundary. We also see that the maximum of the solution decreases as $\lambda$ increases when considering solutions corresponding to the branch of the bifurcation curves bifurcating from $\infty$ at $\lambda = 0$. We use the eigenfunction from \eqref{single_eigenfunction}, namely $\phi_1(x) = A[\cosh(x) - \frac{\cosh(1)-1}{\sinh(1)} \sinh(x)]$ (with $A = 1$ chosen for convenience), for the initial guess for a given initial $\lambda$ value before using continuation to trace the remaining points on the bifurcation curve.
The $\lambda$ value for the initial instance of the problem is chosen to be near but less than the bifurcation point when such a point exists.  
The results for $f(s) = s^2$ can be found in Figure~\ref{fig:single_fprime_0}, 
$f(s) = 2s+s^2$ can be found in Figure~\ref{fig:single_fprime_1}, 
and $f(s) = 0.1s - 0.1s^2 + s^3$ can be found in Figure~\ref{fig:single_cubic}.


\begin{figure}[H]
\begin{minipage}{.45\textwidth}
    \includegraphics[scale=0.53]{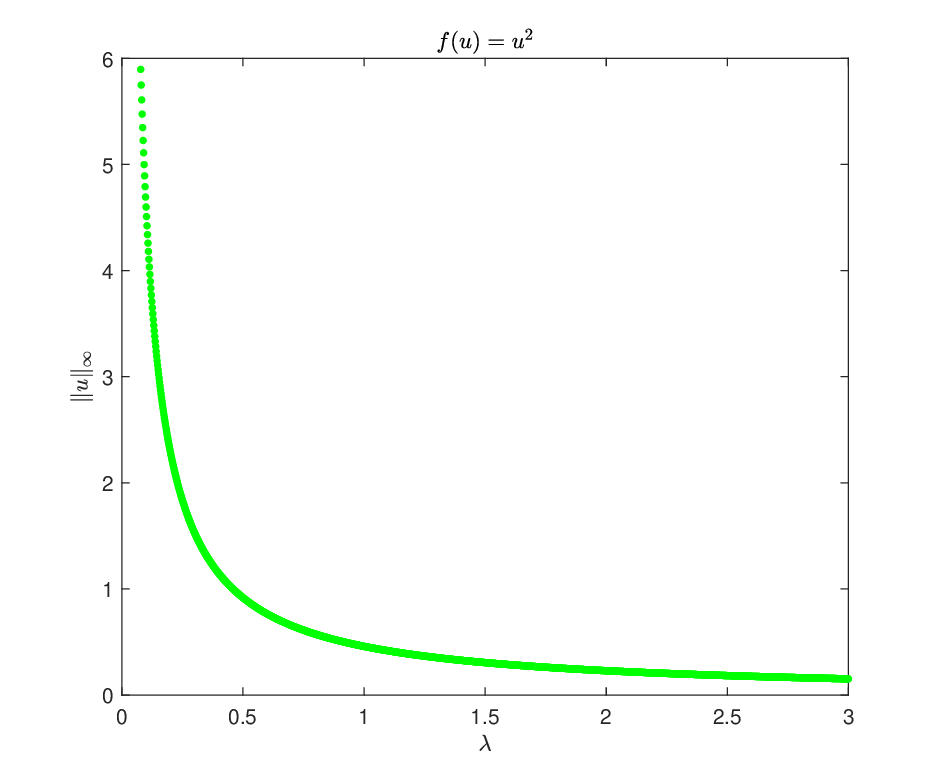}
\end{minipage}
\hspace{.05in}
\begin{minipage}{.45\textwidth}
    \includegraphics[scale=0.50]{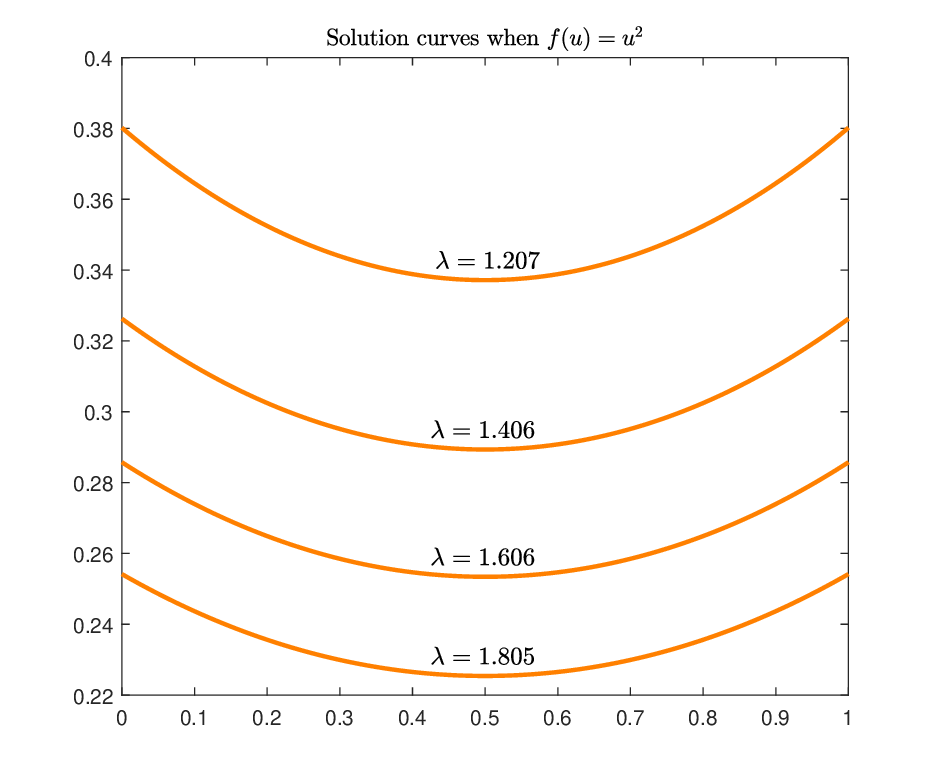}
\end{minipage}
\caption{Bifurcation diagram and sample positive solutions for $f(s) = s^2$ with $f'(0) = 0$ using $M=151$. Here $\lambda_1 = \infty$ (see \eqref{lambda1_single}). We begin by fixing an arbitrary value (not too large) $\lambda_0 = 3$ and sweep left along $[0.01,\lambda_0]$. This validates Theorem 1.1 from \cite{BCDMP_2024} showing bifurcation from infinity only.}
\label{fig:single_fprime_0}
\end{figure}



\begin{figure}[H]
\begin{minipage}{.45\textwidth}
    \includegraphics[scale=0.53]{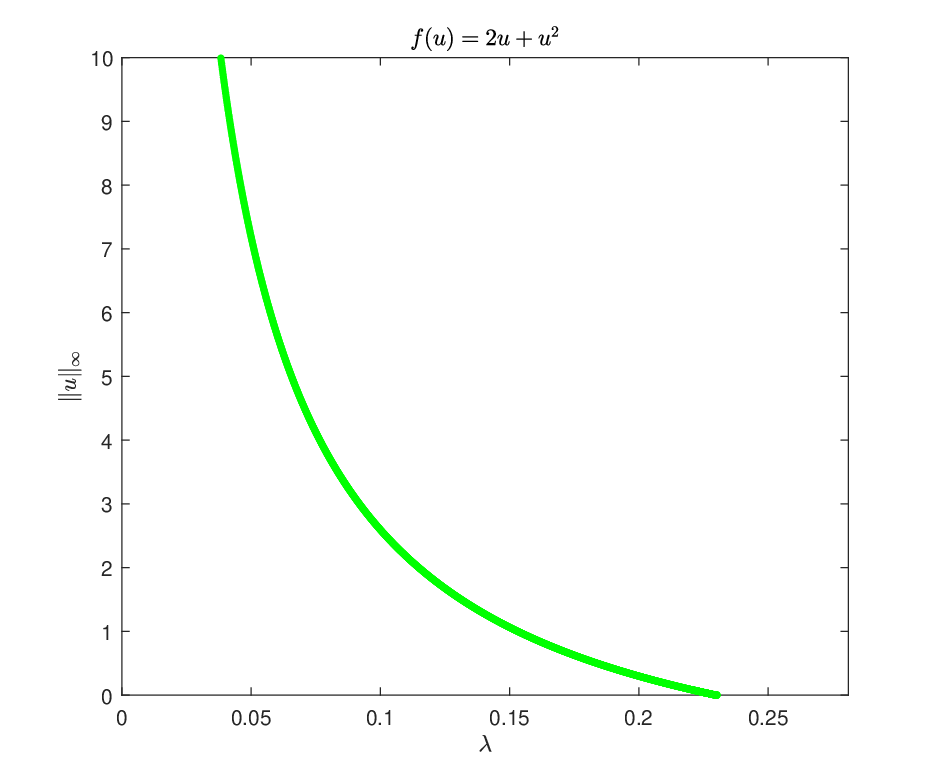}
\end{minipage}
\hspace{.05in}
\begin{minipage}{.45\textwidth}
    \includegraphics[scale=0.51]{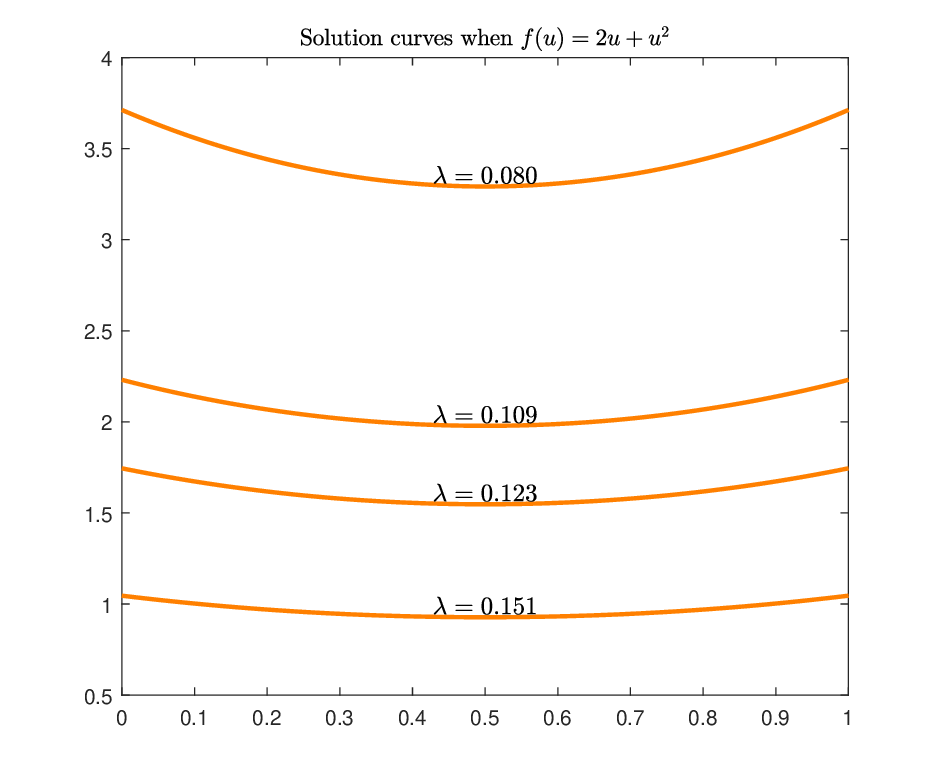}
\end{minipage}
\caption{Bifurcation diagram and sample positive solutions for $f(s) = 2s + s^2$ with $f'(0) = 2$ using $M=175$. Using \eqref{lambda1_single}, $\lambda_1 \approx 0.23105858$ is the location of the bifurcation point. We begin by sweeping left along $[0.01,\lambda_1-\delta]$ with $\delta > 0$ and small. We then sweep right along $[\lambda_1-\delta, \lambda_1]$, initially using the stored solution at $\lambda_1-\delta$. This shows subcritical bifurcation as predicted by theory.}
\label{fig:single_fprime_1}
\end{figure}



\begin{figure}[H]
\begin{minipage}{.45\textwidth}
    \includegraphics[scale=0.53]{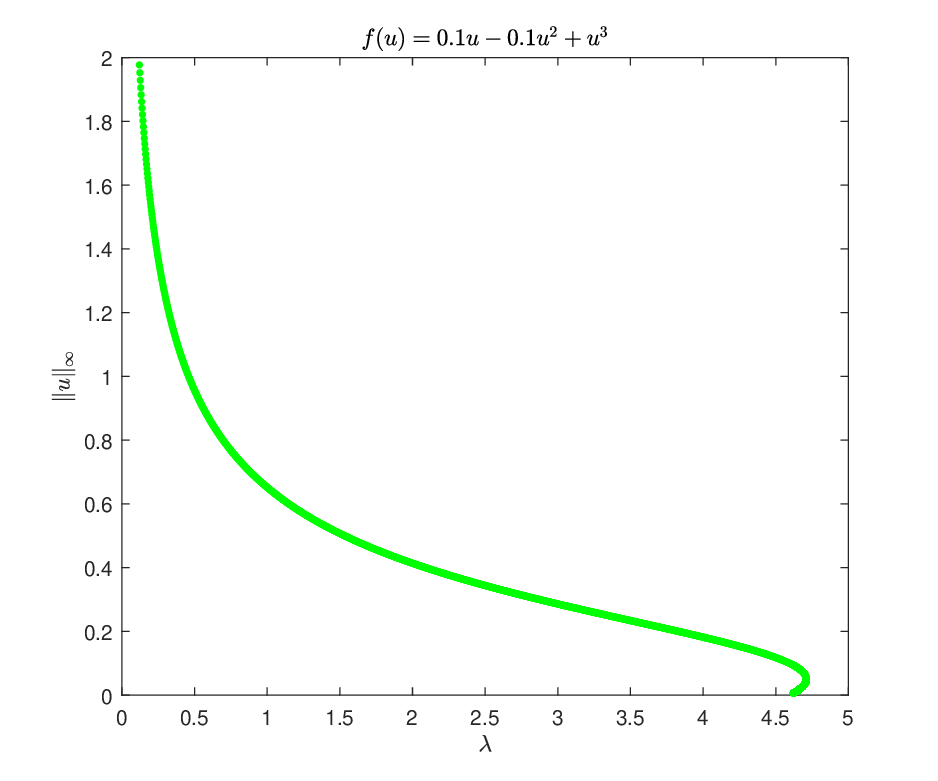}
\end{minipage}
\hspace{.05in}
\begin{minipage}{.45\textwidth}
    \includegraphics[scale=0.51]
    {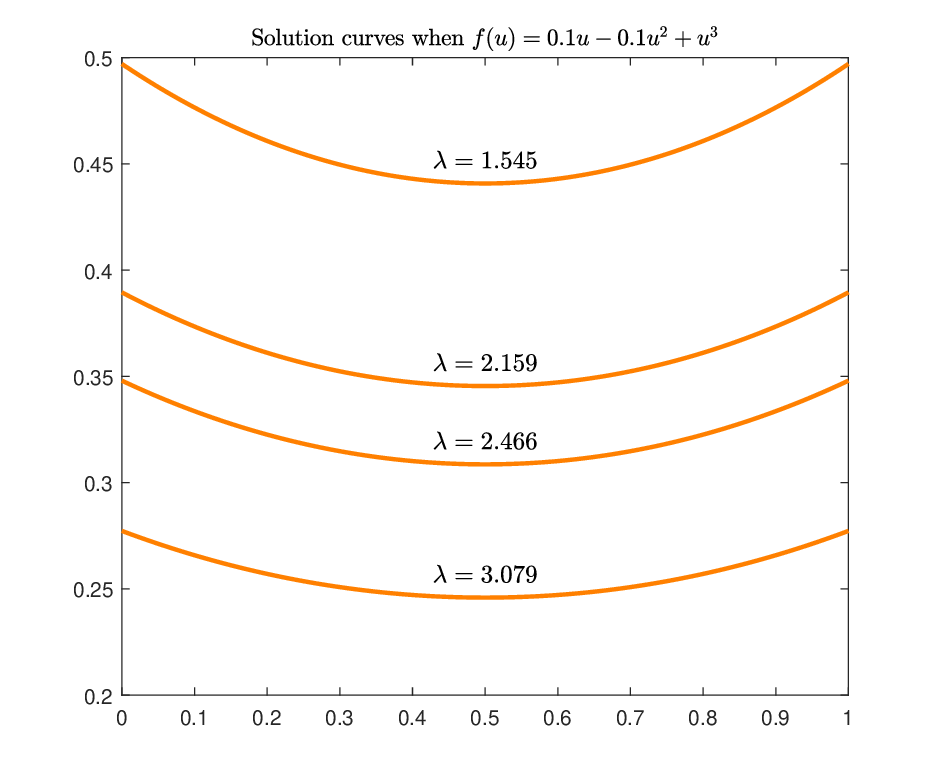}
\end{minipage}
\caption{Bifurcation diagram and sample positive solutions for $f(s) = 0.1s - 0.1s^2 + s^3$ with $f'(0) = 0.1$ using $M=101$. Using \eqref{lambda1_single}, $\lambda_1 \approx 4.62117157$ is the location of the bifurcation point. We begin by sweeping left along $[0.01,\lambda_1-\delta]$ with $\delta > 0$ and small. We then sweep right along $[\lambda_1-\delta, \lambda_{*}]$, initially using the stored solution at $\lambda_1-\delta$, until \emph{fsolve} fails to converge at $\lambda_{*}$ (approximately the turning point). In this case, the value just prior to $\lambda_{*}$ is $\lambda_{L} \approx 4.710532143928570$. Finally, we sweep left again along $[\lambda_1, \lambda_{L}]$ using the last converged solution at $\lambda_{L}$. Note the supercritical bifurcation and turning point behavior, confirming Theorem 1.2 multiplicity results.}
\label{fig:single_cubic}
\end{figure}


\subsection{Validation of Analytical Results from \cite{BCDMP_2024}}
\label{subsec:connection-single}
Our numerical results validate the theoretical framework established in \cite{BCDMP_2024} while also accurately approximating the principal eigenvalue $\lambda_1$ when $f(0) = 0$ and $f'(0) > 0$. Theorem 1.1 in \cite{BCDMP_2024} establishes that under hypothesis $(H_\infty)$, there exists $\hat{\lambda} > 0$ such that for all $\lambda \in (0, \hat{\lambda}]$, problem \eqref{pde_lambda} has a positive weak solution with $\|u\|_{C(\Omega)} \to \infty$ as $\lambda \to 0^+$, which is confirmed across all test cases (Figures \ref{fig:single_fprime_0}--\ref{fig:single_cubic}). When $f'(0) = 0$ (Figure \ref{fig:single_fprime_0}), bifurcation from the trivial solution occurs only at infinity, directly validating Theorem 1.1. Theorem 1.2 provides conditions for the existence of a connected component $C^+$ of positive solutions emanating from the trivial solution at $(\frac{\mu_1}{f'(0)}, 0)$ and possessing a unique bifurcation point from infinity at $\lambda = 0$. When $f'(0) > 0$ (Figures \ref{fig:single_fprime_1} and \ref{fig:single_cubic}), our computations confirm bifurcation from the trivial solution at $\lambda_1$ values matching theoretical predictions, with connected solution branches linking trivial and infinity bifurcations, and subcritical bifurcation behavior when higher-order terms are positive. Figure \ref{fig:single_cubic} additionally demonstrates supercritical bifurcation with a turning point, illustrating how higher-order terms determine bifurcation direction and create solution multiplicity, thereby validating both Theorems 1.1 and 1.2 under different parameter regimes.

\section{Application to the Coupled Systems Case} 
\label{sec:coupled-systems}

In this section we extend and test the FD method \eqref{d_sol_single} for solving \eqref{pde_lambda} to approximate 
solutions to the system of equations \eqref{PDE_Sys}.  In particular, we consider the discrete formulation 
\begin{equation}\label{d_sol}
    \left\lbrace\begin{array}{rcll}
    -\Delta_{h} u_{1,h}+ u_{1,h}&=&0 & \mbox{ in } \mathcal{T}_h \cap \Omega, \\
    -\Delta_h u_{2,h}+ u_{2,h} &=&0 & \mbox{ in } \mathcal{T}_h \cap \Omega,\\
    \nabla_h^* u_{1,h}  \cdot \widehat{n} - \lambda f( u_{2,h}  ) &=& 0 & \mbox{ on } \mathcal{T}_h \cap \widetilde{\partial \Omega},\\
   \nabla_h^* u_{2,h}  \cdot \widehat{n} - \lambda g(u_{1,h} ) &=& 0 & \mbox{ on } \mathcal{T}_h \cap \widetilde{\partial \Omega}\\
    \end{array}\right.
\end{equation}
in order to validate and gain intuition for the theoretical results obtained in \cite{BCDMP_Systems}.  
The numerical results in Section~\ref{sec:FD} can naturally be extended to \eqref{d_sol} with 
a similar finite difference method for sublinear problems analyzed in \cite{BLM_2025}.  
We again perform a principal eigenvalue analysis for the one-dimensional problem to benchmark the performance of our method when approximating the bifurcation from the trivial solution when such a value for $\lambda$ exists.  
The method is again implemented in MATLAB using \texttt{fsolve} to solve the corresponding system of nonlinear equations.  

The theoretical framework for systems is significantly richer than the single equation case, involving matrix analysis and more complex eigenvalue problems. The systems case algorithm for solving the discrete problem strategically leverages the single equation framework through an array concatenation approach. Rather than developing an entirely new discretization scheme, the coupled system is solved by extending the solution vector from $\vec{u} \in \mathbb{R}^M$ to $\vec{w} \in \mathbb{R}^{2M}$, where the first $M$ components represent the discretized $u$ values and the last $M$ components represent the discretized $v$ values. This allows the finite difference operators and interior point discretizations to be reused identically for both equations through systematic indexing. The coupling between equations manifests only in the boundary conditions, where $u$'s normal derivative depends on $v$ through $\lambda f(v)$ and $v$'s normal derivative depends on $u$ through $\lambda g(u)$. This design enables the same \texttt{fsolve} infrastructure to handle both single equations and coupled systems while maintaining code modularity and avoiding duplication of the core discretization logic.


\subsection{Principal Eigenvalue for the 1D System Case}
We first perform a principal eigenvalue analysis for the system of equations \eqref{PDE_Sys} when $N=1$ with $\Omega = (0,1)$. 
Positive solutions bifurcate from infinity at $\lambda = 0$ and, when applicable, from the trivial solution at 
\begin{align} \label{system_lambda1}
    \lambda_1 &= \frac{\mu_1}{\sqrt{f'(0)g'(0)}},
\end{align} 
where $\mu_1$ denotes the first Steklov eigenvalue. 
To benchmark our methods, we first determine the bifurcation point from the trivial solution, which corresponds to finding the principal eigenvalue $\lambda_1$ of the linearized one-dimensional system:
\begin{align*}
    \begin{cases}
        -\phi'' + \phi = 0 ~;~ x \in (0,1) , \\
        -\psi'' + \psi = 0 ~;~ x \in (0,1) , \\
        -\phi'(0) - \lambda f'(0) \psi(0) = 0 , & \\
        \hspace{.13in} \phi'(1) - \lambda f'(0) \psi(1) = 0 , &  \\
        -\psi'(0) - \lambda g'(0) \phi(0) = 0 , & \\
        \hspace{.13in} \psi'(1) - \lambda g'(0) \phi(1) = 0. & 
    \end{cases}
\end{align*}

The general solution to the differential equations \(-\phi'' + \phi = 0\) and \(-\psi'' + \psi = 0\) are:
\begin{align*}
    \phi(x) &= A \cosh(x) + B \sinh(x), \\
    \psi(x) &= C \cosh(x) + D \sinh(x),
\end{align*}
where \(A\), \(B\), \(C\), and \(D\) are constants determined by the boundary conditions. From this, it follows that
\begin{align*}
    \phi'(x) &= A \sinh(x) + B \cosh(x) , \\
    \psi'(x) &= C \sinh(x) + D \cosh(x).
\end{align*}
Using the boundary conditions at \(x = 0\), we have:
\begin{align*}
    -\phi'(0) - \lambda f'(0) \psi(0) = 0
        &\implies -B - \lambda f'(0) C = 0 
        \implies B = -\lambda f'(0) C. \\
    -\psi'(0) - \lambda g'(0) \phi(0) = 0 
        &\implies -D - \lambda g'(0) A = 0 
        \implies D = -\lambda g'(0) A.
\end{align*}
Thus, we can simplify the expression for  $(\phi, \psi)$ in terms of $A$ and $C$ only:
\begin{subequations}\label{sys_eigenfunctionsAC}
\begin{align}
    \phi(x) &= A \cosh(x) - C \lambda f'(0) \sinh(x) \\
    \psi(x) &= C \cosh(x) - A \lambda g'(0) \sinh(x).
\end{align}
\end{subequations}


Now we apply the boundary conditions at $x = 1$. Observe that
   \begin{align} \label{system_ACsolving1}
       \phi'(1) - \lambda f'(0) \psi(1) 
       &= \Big( A \sinh(1) - \lambda f'(0) C
           \cosh(1) \Big) - 
           \lambda f'(0) \Big( C \cosh(1) - \lambda g'(0) A \sinh(1) \Big) \notag \\
       &= A \sinh(1) - \lambda f'(0) C \cosh(1) - 
          \lambda f'(0) C \cosh(1) + \lambda^2 f'(0) g'(0) A \sinh(1) \notag \\
       &= A \sinh(1) - 2\lambda f'(0) C \cosh(1) + \lambda^2 f'(0) g'(0) A \sinh(1) \notag \\
       &= A \sinh(1) \Big( 1 + \lambda^2 f'(0) g'(0) \Big) - 
       2\lambda f'(0)\cosh(1) C = 0.
   \end{align}
   From this, we obtain $C$ in terms of $A$:
   \begin{align} \label{system_eigenfunction_AC}
       C &= \frac{A \sinh(1) \Big( 1 + \lambda^2 f'(0) g'(0) \Big)}{2 \lambda f'(0) \cosh(1)}
         = \Bigg[ \frac{\Big( 1 + \lambda^2 f'(0) g'(0) \Big) \tanh(1)}{2 \lambda f'(0)} \Bigg] A.
   \end{align}
Now the forms of $\phi$ and $\psi$ simplify further:
    \begin{align}
        \phi(x) 
            &=  A \Bigg( \cosh(x) - \frac{1}{2} \Big(1+ \lambda^2 f'(0) g'(0) \Big) \tanh(1) \sinh(x) \Bigg), \label{system_phi} \\
        \psi(x) 
            &= A \Bigg( \Big( \frac{1+\lambda^2 f'(0)g'(0)}{2\lambda f'(0)} \Big) \tanh(1) \cosh(x) - \lambda g'(0) \sinh(x) \Bigg). \label{system_psi}
    \end{align}


Applying the other boundary condition at $x = 1$, we have:
\begin{align} \label{system_ACsolving2}
\psi'(1) - \lambda g'(0) \phi(1)
       &= C \sinh(1) - \lambda g'(0) A \cosh(1) - 
          \lambda g'(0) \Big(A \cosh(1) - \lambda f'(0) C \sinh(1) \Big) \notag \\
       &= C \sinh(1) - \lambda g'(0) A \cosh(1) - 
          \lambda g'(0) A \cosh(1) + \lambda^2 f'(0) g'(0) C \sinh(1) \notag \\
       &= C \sinh(1) \Big( 1 + \lambda^2 f'(0) 
          g'(0) \Big) - 2\lambda g'(0) \cosh(1) A = 0.
   \end{align}
Solving for $A$, we have:
   \begin{align*}
      A &= \frac{\sinh(1) \Big( 1 + \lambda^2 
            f'(0) g'(0) \Big)}{2\lambda g'(0) \cosh(1)} \cdot C \\
        &= \frac{\tanh(1) \Big( 1 + \lambda^2 
            f'(0) g'(0) \Big)}{2\lambda g'(0)} \cdot \frac{\Big( 1 + \lambda^2 f'(0) g'(0) \Big) \tanh(1)}{2 \lambda f'(0)} \cdot A  \\
        &= \frac{\Big( 1 + \lambda^2 f'(0) 
            g'(0) \Big)^2 [\tanh(1)]^2}
            {4 \lambda^2 f'(0) g'(0)} A.
   \end{align*}
   This implies that
   \begin{align*}
       A \left( 1 - \frac{\Big( 1 + \lambda^2 f'(0) g'(0) \Big)^2 [\tanh(1)]^2}
       {4 \lambda^2 f'(0) g'(0)} \right) = 0.
   \end{align*}
   Since we assume $A \neq 0$ (to avoid the zero eigenfunction), the principal eigenvalue  $\lambda_1$ can be approximated (numerically) as the smallest positive solution of:
   \begin{align} \label{system_mu1}
       \left[ f'(0) g'(0) \right]^2 \lambda^4 
        + \left( 2 - \frac{4}{\tanh^2(1)} \right) f'(0) g'(0) \lambda^2 
        + 1 = 0.
   \end{align}

\underline{Existence of $\lambda_1 > 0$:} \\

The existence of at least one positive solution is justified by the Intermediate Value Theorem. Define
\begin{align*}
    h(\lambda) := \left[ f'(0) g'(0) \right]^2 \lambda^4 
        + \left( 2 - \frac{4}{\tanh^2(1)} \right) f'(0) g'(0) \lambda^2 
        + 1.
\end{align*}
Observe that $h(0) = 1$ and, since $h$ is a polynomial of degree 4 in $\lambda$, $h$ is continuous (and differentiable). Hence
\begin{align*}
    h'(\lambda) &= 4 [f'(0)g'(0)]^2 \lambda^3 +
                   2 \left( 2 - \frac{4}{\tanh^2(1)} \right) f'(0) g'(0) \lambda  \\
                &= 2\lambda \left( 2[f'(0)g'(0)]^2 
                   \lambda^2 + \left( 2 - \frac{4}{\tanh^2(1)} \right) f'(0) g'(0) \right).
\end{align*}
Setting $h'(\lambda) = 0$ to identify the location of a relative minimum, since $\lambda > 0$, we must have 
\begin{align*}
    2[f'(0)g'(0)]^2 \lambda^2 + \left( 2 - \frac{4}{\tanh^2(1)} \right) f'(0) g'(0) = 0.
\end{align*}
Solving for $\lambda$, we have:
\begin{align*}
    \lambda^2 &= \frac{\Big(\frac{4}{\tanh^2(1)}-2 
                 \Big) f'(0) g'(0)}{2 [f'(0)g'(0)]^2}
              = \frac{\Big(\frac{2}{\tanh^2(1)}-1 
                 \Big)}{f'(0)g'(0)}
            \implies 
    \lambda = +\sqrt{\frac{\Big(\frac{2}{\tanh^2(1)}-1 
                 \Big)}{f'(0)g'(0)}}.
\end{align*}

The above calculation indicates that there is precisely one critical point for $h$ for $\lambda \in (0,\infty)$. We claim that $h$ is decreasing on the interval $I := \left( 0, \sqrt{\frac{\Big(\frac{2}{\tanh^2(1)}-1 \Big)}{f'(0)g'(0)}} \right)$. Since $\tanh^2(x) < 1$ for all $x \in \mathbb{R}$, it follows that $\tanh^2(1) < 1$. Hence $1 < \displaystyle \frac{2}{\tanh^2(1)}-1$, which means a suitable test value to verify $h'(\lambda) < 0$ on $I$ is $\lambda_{*} := \displaystyle \frac{1}{\sqrt{f'(0)g'(0)}}$. Observe that
\begin{align*}
h(\lambda_{*}) 
        &= 2[f'(0)g'(0)]^2 \cdot \frac{1} 
           {[f'(0)g'(0)]^2} + \left( 2 - \frac{4}{\tanh^2(1)} \right) f'(0) g'(0) 
           \cdot \frac{1}{f'(0)g'(0)} + 1 \\
        &= 4 - \frac{4}{\tanh^2(1)} \\
        &= 4 \left( 1 - \coth^2(1) \right)
        < 0
\end{align*}
since $\cosh^2(1) - \sinh^2(1) = 1 > 0$. Since $h$ is continuous, there must exist a positive root of $h$ in this interval ($h$ changes from positive to negative).
In particular, we have verified the principle eigenvalue for the linearized one-dimensional system $\lambda_1$ exists, is positive, and satisfies  $\lambda_1 < \sqrt{\frac{\Big(\frac{2}{\tanh^2(1)}-1 \Big)}{f'(0)g'(0)}}$. \\


\underline{Positivity of $\phi$ on $(0,1)$:} \\

First we examine $\phi$ from \eqref{system_phi}. From our previous analysis, we know $\lambda_1 < \sqrt{\frac{\frac{2}{\tanh^2(1)}-1}{f'(0)g'(0)}}$. Therefore, $1 + \lambda_1^2 f'(0) g'(0) < \frac{2}{\tanh^2(1)}$. Hence 
\begin{align*} 
    \cosh(x) - \frac{1}{2} \Big( 1 + \lambda_1^2 f'(0) g'(0) \Big) \tanh(1) \sinh(x) 
        &> \cosh(x) - \frac{1}{2} \frac{2}{\tanh^2(1)} \tanh(1) \sinh(x) \\
        &= \cosh(x) - \coth(1) \sinh(x) \\
        &> 0,
\end{align*}
provided that $\coth(x) > \coth(1)$. Since $\coth$ is a decreasing function, we immediately see this is true for $x \in (0, 1)$. \\

\underline{Positivity of $\psi$ on $(0,1)$:} \\

Recall \eqref{system_psi}. We have already shown that the function $h$ is decreasing on the interval $I$. In general, to prove that the function $\tilde{A} \cosh(x) - \tilde{B} \sinh(x) > 0$ with $\tilde{A}, \tilde{B} > 0$, it is enough to show that $\tilde{A} - \tilde{B} \geq 0$. That is because it is positive when $\frac{\tilde{A}}{\tilde{B}} > \tanh(x)$ and since $\tanh(x) < 1$ for all $x \in \mathbb{R}$, it is enough if $\frac{\tilde{A}}{\tilde{B}} \geq 1$. Now note that $k(\lambda) := \underbrace{\frac{1+\lambda^2 f'(0)g'(0)}{2\lambda f'(0)} \tanh(1)}_{\tilde{A}} - \underbrace{\lambda g'(0)}_{\tilde{B}}$ can easily be shown to have a unique positive root, $\sqrt{\frac{\tanh(1)}{f'(0)g'(0)[2-\tanh(1)]}}$. Furthermore, $\displaystyle \lim_{\lambda \rightarrow 0^{+}} k(\lambda) = \infty$. Since $k$ is continuous, it must be that $k(\lambda) > 0$ for $\lambda \in \Big(0, \sqrt{\frac{\tanh(1)}{f'(0)g'(0)[2-\tanh(1)]}} \Big)$. Therefore, we must show that 
\begin{align*} 
    \lambda_1 
    < \sqrt{\frac{\tanh(1)}{f'(0)g'(0)[2-
       \tanh(1)]}} 
    < \sqrt{\frac{\frac{2}{\tanh^2(1)}-1}
      {f'(0)g'(0)}}.
\end{align*} 

First, we establish the second part of the inequality. It is true provided that
$\frac{\tanh(1)}{2-\tanh(1)} < \frac{2}{\tanh^2(1)} - 1$. Recall that $\tanh(1) < 1$, therefore $2 - \tanh(1) > 1$. Hence the $\text{LHS} < 1$. Furthermore, $\tanh^2(1) < 1$, hence $\frac{2}{\tanh^2(1)} > 2$ and this implies $\text{RHS} > 1$. Therefore the inequality is satisfied. Now we establish the first part of the previous inequality. We will do so by evaluating $h$ at the positive root of $k$ and showing that we get a negative number [recall that $h(\lambda_1) = 0$ and $h$ achieves its minimum at $\lambda = \sqrt{\frac{\frac{2}{\tanh^2(1)}-1}{f'(0)g'(0)}}$]. Observe that
\small
\begin{align*}
    h \Bigg( \sqrt{\frac{\tanh(1)}{f'(0)g'(0)[2-\tanh(1)]}}\Bigg)
    = [f'(0)g'(0)]^2 \Bigg( \sqrt{\frac{\tanh(1)}{f'(0)g'(0)[2-\tanh(1)]}} \Bigg)^4 + \\
    &\hspace{-2.85in} 
        \Big( 2 - \frac{4}{\tanh^2(1)} \Big) f'(0)g'(0) \Bigg( \sqrt{\frac{\tanh(1)}{f'(0)g'(0)[2-\tanh(1)]}} \Bigg)^2 + 1 \\
    &\hspace{-3.05in} =
        \frac{\tanh^2(1)}{[2-\tanh(1)]^2} +
        2 \Big( \frac{\tanh^2(1)-2}{\tanh^2(1)} \Big) \Big( \frac{\tanh(1)}{2-\tanh(1)} \Big) + 1 \\
    &\hspace{-3.05in} =
         \frac{\tanh^2(1)}{[2-\tanh(1)]^2} + 1
         + 2 \Bigg( \frac{\tanh^2(1)-2}{\tanh(1)[2-\tanh(1)]} \Bigg) \\
    &\hspace{-3.05in} < 0,
\end{align*}
provided that
\begin{align*}
    \Big( \frac{\tanh(1)}{2-\tanh(1)} \Big)^2 + 1
    &< 2 \Bigg( \frac{2-\tanh^2(1)}
         {\tanh(1)[2-\tanh(1)]} \Bigg).
\end{align*}
We establish this inequality by showing the LHS is bounded above by 1 and the RHS is bounded below by 2. For the LHS, $\tanh(1) < 1 \implies 2-\tanh(1) > 1$. Therefore $\frac{1}{2-\tanh(1)} < 1 \implies \frac{\tanh(1)}{2-\tanh(1)} < \tanh(1) \implies \Big( \frac{\tanh(1)}{2-\tanh(1)} \Big)^2 < \tanh^2(1) < 1$. For the RHS, $\frac{2-\tanh^2(1)}{\tanh(1)[2-\tanh(1)]} > 1$ provided $2 - \tanh^2(1) > \tanh(1)[2 - \tanh(1)]$. This is true provided $2 - \tanh^2(1) > 2\tanh(1) - \tanh^2(1)$, or equivalently, if $2 > 2\tanh(1)$. This is immediate again due to $1 > \tanh(1)$. We have now established that $\psi > 0$ on $(0,1)$.


\subsection{Coding Algorithm for Systems}

The tests all use the domain $\Omega = (0,1)$ and analogous discretization parameters.  
First we divide our interval $\Omega = (0,1)$ into $M-1$ subintervals with uniform width $h>0$ letting $x_1 = 0$ and $x_M = 1$. We approximate  $u(x_i)$ by $u_i$ and $v(x_i)$ by $v_i$.  Let $\vec{w} \in \mathbb{R}^{2M}$ be the finite difference solution defined as: 
\[
w_i:=
\begin{cases}
u_i &  \mbox{ for } 1\leq i \leq M,\\
v_{i-M} &  \mbox{ for } M+1 \leq i \leq 2M.
\end{cases}
\]
We similarly use \texttt{fsolve} to solve $F(\vec{w})=\vec{0}$, where $F: \mathbb{R}^{2M}  \to \mathbb{R}^{2M}$ is defined by $F_1(\vec{w}):=\frac{u_1-u_2}{h}-\lambda f (v_1)$; 
$F_i(\vec{w}):=-\frac{1}{h^2}u_{i-1}+(\frac{2}{h^2}+1)u_i-\frac{1}{h^2}u_{i+1}$ for $i \in \{2,3,\ldots , M-1\}$; 
$F_M(\vec{w})=\frac{u_M-u_{M-1}}{h}-\lambda f(v_M)$; 
$F_{M+1}(\vec{w})=\frac{v_1-v_2}{h}-\lambda g (u_1)$;  
$F_i(\vec{w})=-\frac{1}{h^2}v_{i-1-M}+(\frac{2}{h^2}+1)v_{i-M}-\frac{1}{h^2}v_{i+1-M}$ for $i \in \{M+2,M+3,\ldots , 2M-1\}$;  and $F_{2M}(\vec{w})=\frac{v_M-v_{M-1}}{h}-\lambda g(u_M)$.
In all of the numerical experiments, we use the eigenfunction $(\phi, \psi)$ from \eqref{sys_eigenfunctionsAC}, where $\mu_1$ is obtained numerically by solving \eqref{system_mu1}.  
We typically choose a value for $A$ and then use \eqref{system_eigenfunction_AC} to determine $C$ 
instead of solving $\eqref{system_ACsolving1}$ and \eqref{system_ACsolving2} for $A$ and $C$.  
The bifurcation points are obtained by the relationship between $\lambda_1$ and $\mu_1$ from \eqref{system_lambda1}. The exception to using $(\phi, \psi)$ with \eqref{system_eigenfunction_AC} as the initial guess is when the bifurcation point is at $\lambda_1 = \infty$.
Continuation is used to build the bifurcation curve once a single positive solution pair is found for an initial $\lambda$ value.


\subsection{Bifurcation Diagrams and Shape of Solutions for Systems}

We plot solutions of \eqref{d_sol_single} with various $f$ and $\lambda$ values. We see the graphs are concave up with the maximum values along the boundary. We also see that the maximum of the solution decreases as $\lambda$ increases when considering solutions corresponding to the branch of the bifurcation curves bifurcating from $\infty$ at $\lambda = 0$. 
The $\lambda$ value for the initial instance of the problem is chosen to be near but less than the bifurcation point when such a point exists.  
The results for $f(s) = g(s) = s^2$ can be found in Figure~\ref{fig:solutions_quad_quad}, 
$f(s) = s^2 + s$ and $g(s) = s^2$ can be found in Figure~\ref{fig:solutions_quad_quad_linear_without}, 
$f(s) = g(s) = s^2 + s$ can be found in Figure~\ref{fig:quad_quad_linearterms}, 
$f(s) = g(s) = 0.1s - 0.1s^2 + s^3$ can be found in Figure~\ref{fig:cubic_cubic}, 
and $f(s) = 0.1s - 0.1s^2 + s^3$ and $g(s) = s^2$ can be found in Figure~\ref{fig:cubic_quad}.


\begin{figure}[H]
\begin{minipage}{.45\textwidth}
    \includegraphics[scale=0.55]{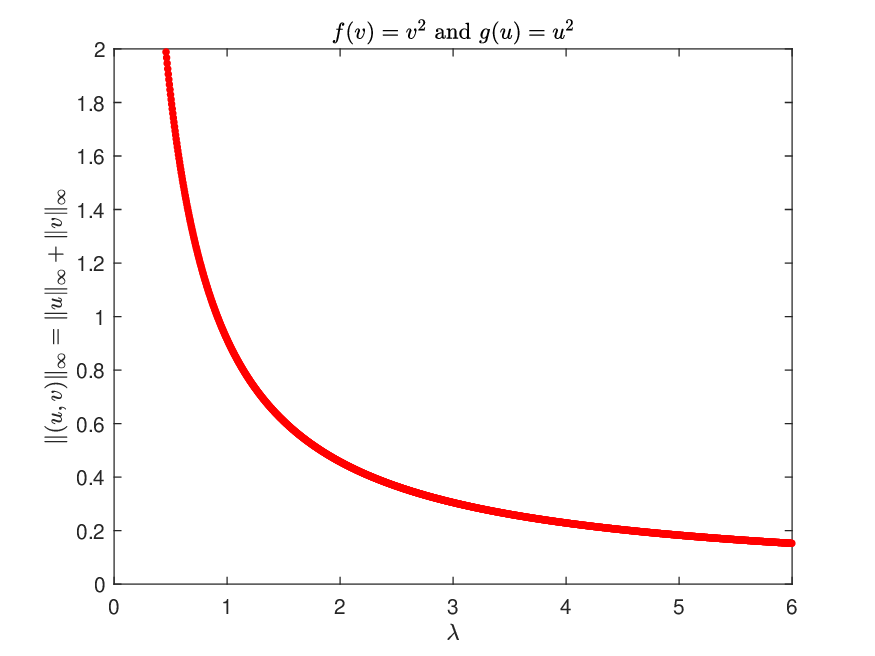}
\end{minipage}
\hspace{.05in}
\begin{minipage}{.45\textwidth}
    \includegraphics[scale=0.48]{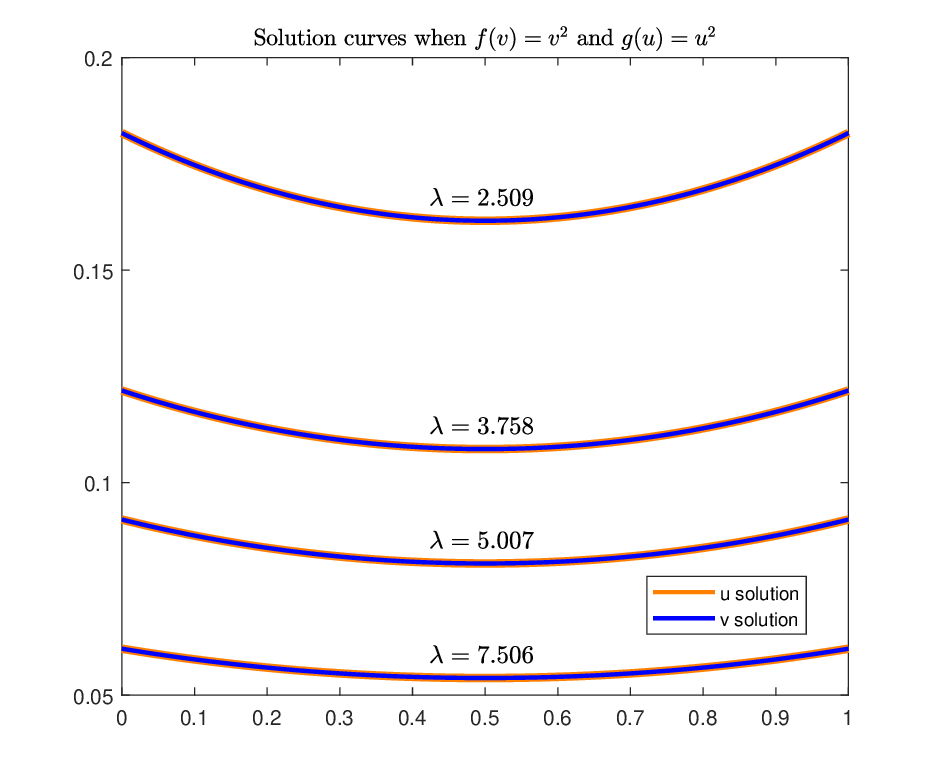}
\end{minipage}
\caption{Bifurcation diagram and sample positive solutions for $f(s) = g(s) = s^2$ using $M=101$. Since $f'(0) = g'(0) = 0$, the location of the bifurcation point is $\lambda_1 = \infty$. We begin by sweeping left along $[0.01, 6]$ using the eigenfunction $(\phi(x), \psi(x)) = (\cosh(x),\sinh(x))$ as the initial guess starting at $\lambda = 6$. The numerical solutions suggest that for all $\lambda \in (0,\lambda_1)$, $u_{\lambda}(x) = v_{\lambda}(x)$ on $[0,1]$. This validates Theorem 1.1 from \cite{BCDMP_Systems}: bifurcation occurs only from infinity.}
\label{fig:solutions_quad_quad}
\end{figure}



\begin{figure}[H]
\begin{minipage}{.45\textwidth}
    \includegraphics[scale=0.52]{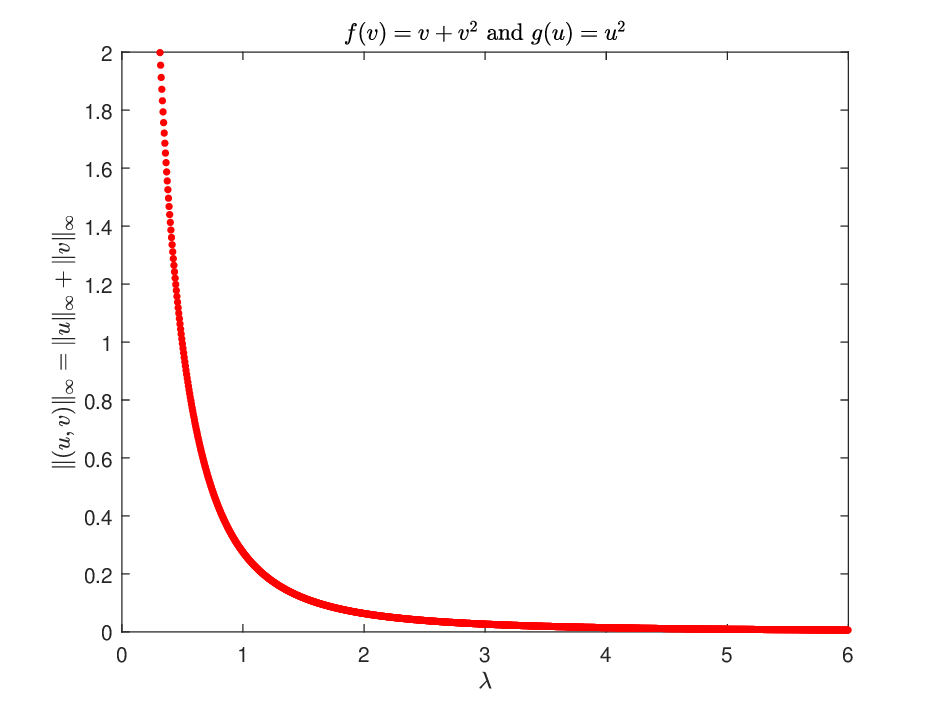}
\end{minipage}
\hspace{.05in}
\begin{minipage}{.45\textwidth}
\includegraphics[scale=0.48]{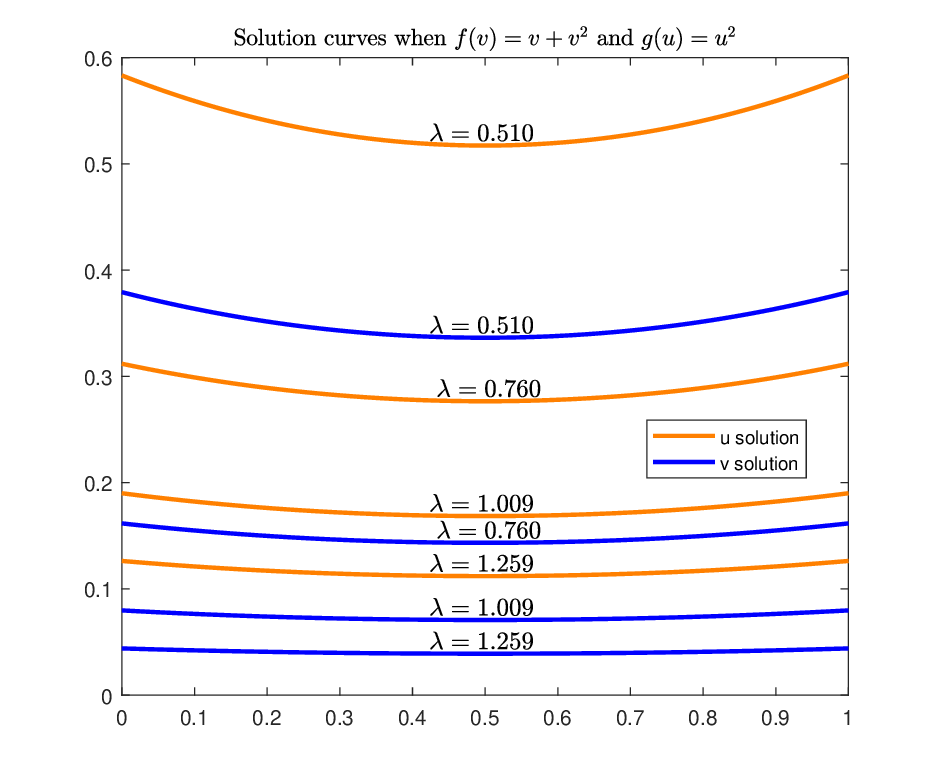}
\end{minipage}
\caption{Bifurcation diagram and sample positive solutions for $f(s) = s^2 + s$, $g(s) = s^2$ using $M=101$. Here $f'(0) = 1$ and $g'(0) = 0$, so by \eqref{system_lambda1}, $\lambda_1 = \infty$ is the location of the bifurcation point. We use the same interval and initial guess as in the previous case. The numerical solutions suggest that for all $\lambda > 0$, we have $v_{\lambda}(x) > u_{\lambda}(x)$ on $[0,1]$. This mixed case still shows bifurcation only from infinity, consistent with the theory.}
\label{fig:solutions_quad_quad_linear_without}
\end{figure}



\begin{figure}[H]
\begin{minipage}{.45\textwidth}
    \includegraphics[scale=0.53]{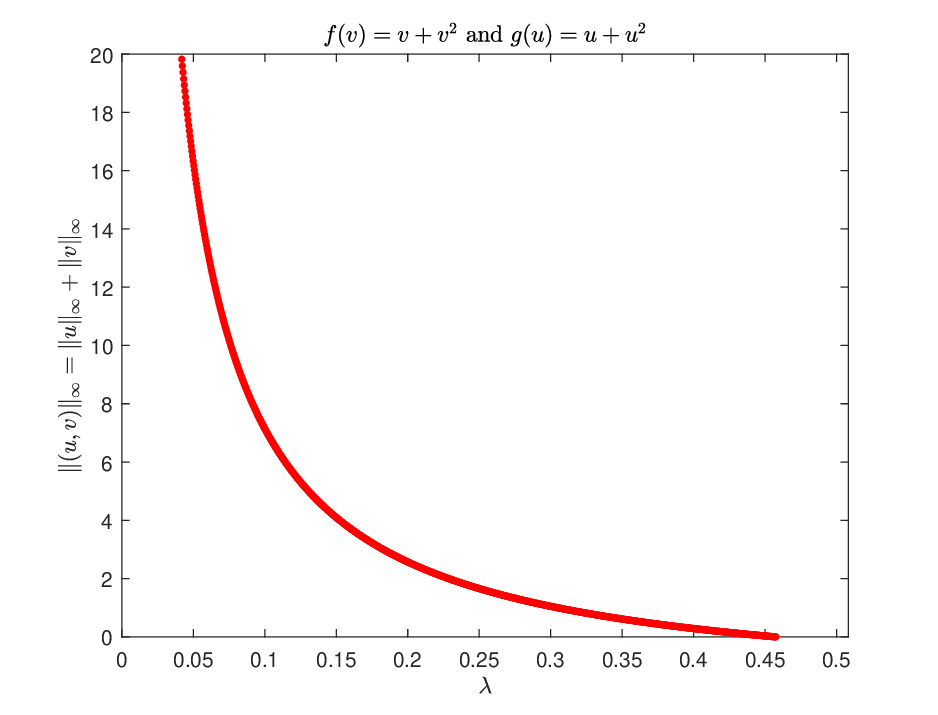}
\end{minipage}
\hspace{.25in}
\begin{minipage}{.45\textwidth}
    \includegraphics[scale=0.45]{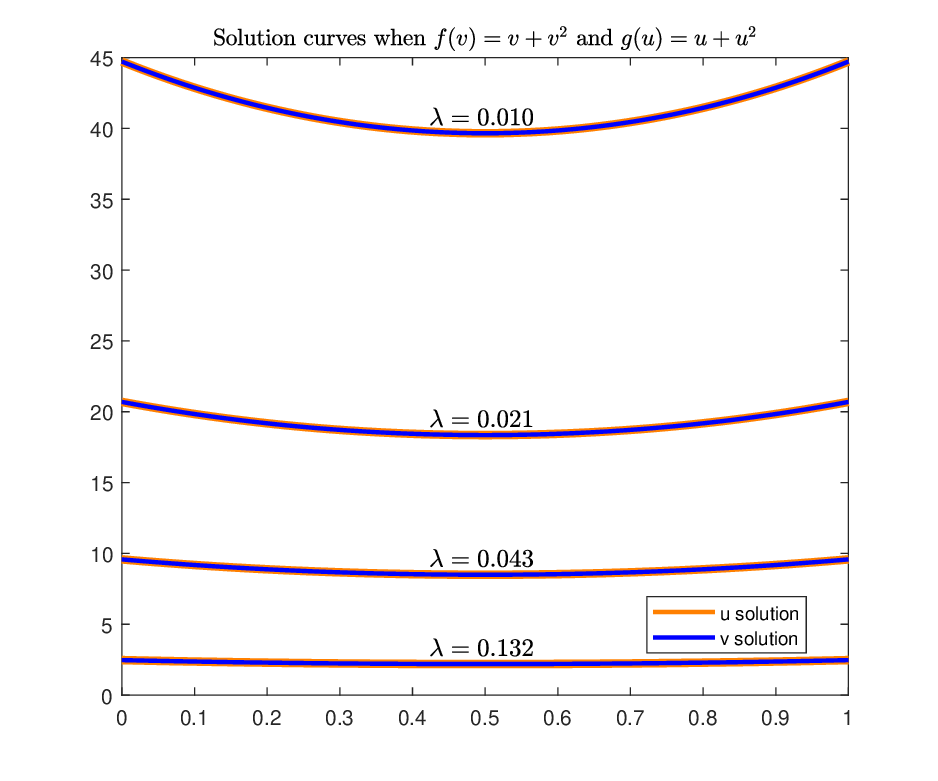}
\end{minipage}
\caption{Bifurcation diagram and sample positive solutions when $f(s) = g(s) = s^2 + s$ using $M=101$. Here $f'(0) = g'(0) = 1$. Hence by \eqref{system_lambda1}, $\lambda_1 \approx 0.46211716$ is the location of the bifurcation point. We begin by sweeping left on $[0,\frac{\lambda_1}{2}]$ by using the eigenfunction $(\phi,\psi)$ with $A = 1$ and $C \approx 1.313035285501163$ (see \eqref{system_eigenfunction_AC}). Then we sweep right along $[\frac{\lambda_1}{2}, \lambda_1]$, initially using the stored solution at $\frac{\lambda_1}{2}$. The numerical solutions suggest that for all $\lambda \in (0,\lambda_1)$, $u_{\lambda}(x) = v_{\lambda}(x)$ on $[0,1]$. This demonstrates Theorem 1.2 from \cite{BCDMP_Systems}: global bifurcation with connected branch from trivial solution to infinity.}
\label{fig:quad_quad_linearterms}
\end{figure}



\begin{figure}[H]
\begin{minipage}{.45\textwidth}
    \includegraphics[scale=0.55]{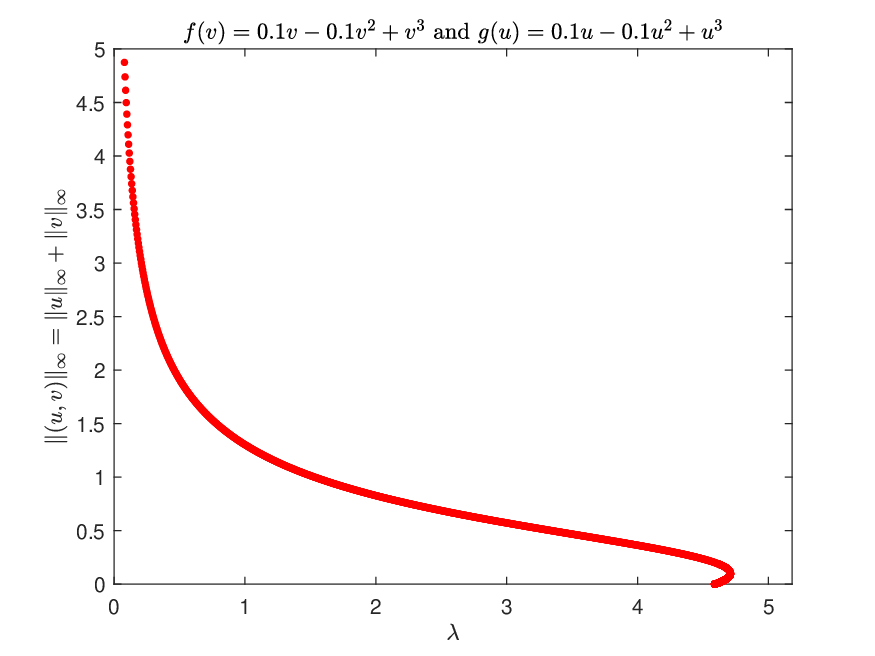}
\end{minipage}
\hspace{.05in}
\begin{minipage}{.45\textwidth}
    \includegraphics[scale=0.45]{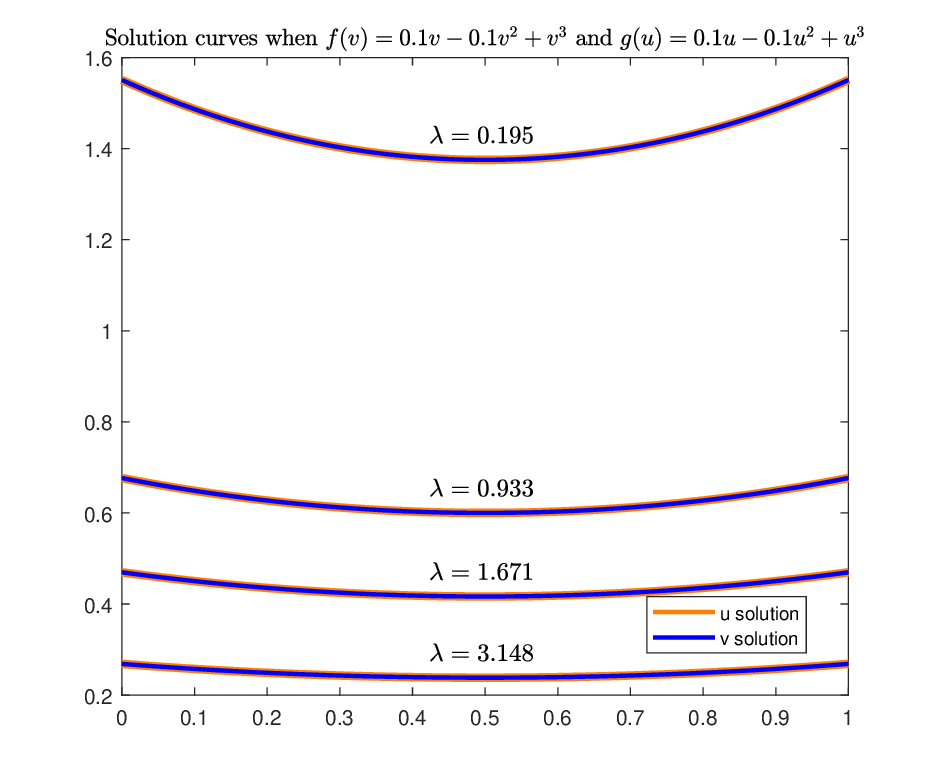}
\end{minipage}
\caption{Bifurcation diagram and sample positive solutions for $f(s) = g(s) = 0.1s - 0.1s^2 + s^3$ using $M=101$. Here $f'(0) = g'(0) = 0.1$. Hence by \eqref{system_lambda1}, $\lambda_1 \approx 4.620403752506588$ is the bifurcation point. Initially, we sweep left along $[0.01,\lambda_1]$ with $A = 1$ and $C \approx 1.313176697711159$ (see \eqref{system_eigenfunction_AC}). Then we sweep right from $[\lambda_1, \lambda_{*}]$, where $\lambda_{*}$ is approximately when \emph{fsolve} first fails to converge. Finally, we sweep left along $[\lambda_1, \lambda_{L}]$, initially using the previously stored solution at $\lambda_{L}$. This demonstrates supercritical bifurcation behavior and validates the multiplicity results from Theorem 1.3 in \cite{BCDMP_Systems}.}
\label{fig:cubic_cubic}
\end{figure}



\begin{figure}[H]
\begin{minipage}{.48\textwidth}
    \includegraphics[scale=0.53]{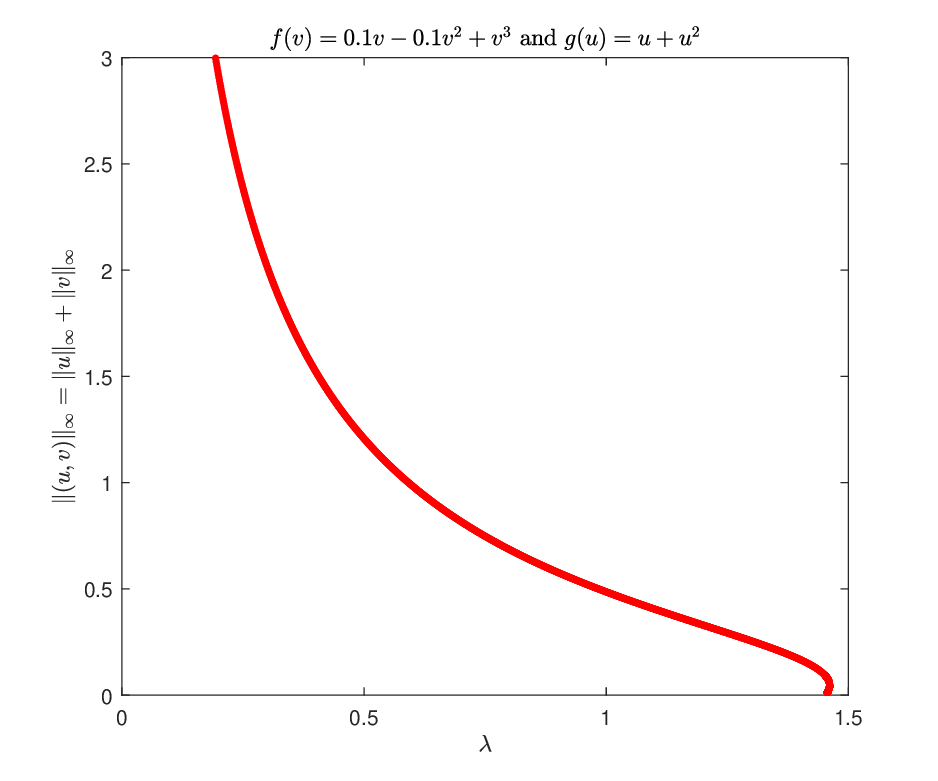}
\end{minipage}
\hspace{.05in}
\begin{minipage}{.45\textwidth}
    \includegraphics[scale=0.50]{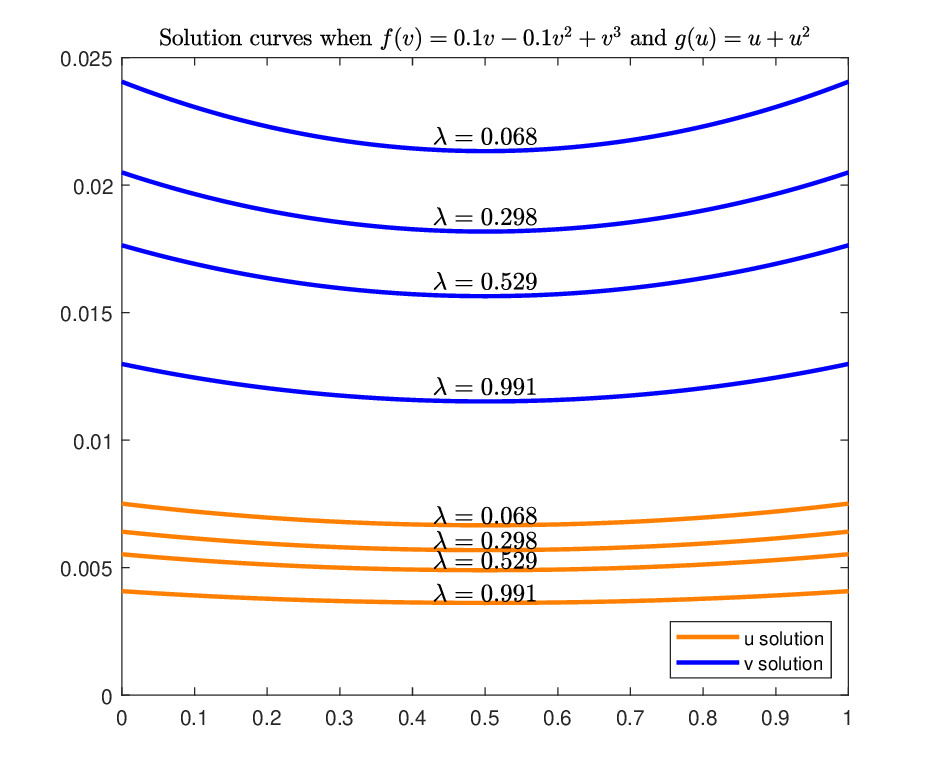}
\end{minipage}
\caption{Bifurcation diagram and solutions for $f(s) = 0.1s - 0.1s^2 + s^3$, $g(s) = s+s^2$ using $M=176$. Here $f'(0) = 0.1$ and $g'(0) = 1$. Hence by \eqref{system_lambda1}, $\lambda_1 \approx 1.46134240$ is the location of the bifurcation point. Initially, we sweep left along the interval $[0.01,\lambda_1]$ with $A = 1$ and $C \approx 4.152182821852064$ (see \eqref{system_eigenfunction_AC}). Then we sweep right from $[\lambda_1, \lambda_{*}]$, where $\lambda_{*}$ is approximately when \emph{fsolve} first fails to converge. Finally, we sweep left along $[\lambda_1, \lambda_{L}]$, initially using the previously stored solution at $\lambda_{L}$. The numerical solutions suggest that for all $\lambda \in (0, \lambda_{*})$, we have $v_{\lambda}(x) > u_{\lambda}(x)$ on $[0,1]$. This mixed case shows intermediate behavior between pure quadratic and pure cubic cases.}
\label{fig:cubic_quad}
\end{figure}


\subsection{Validation of Analytical Results from \cite{BCDMP_Systems}}
\label{subsec:connection-systems}
Our numerical results provide validation for Theorems 1.1, 1.2, and 1.3 corresponding to the theoretical framework for \eqref{PDE_Sys} established in \cite{BCDMP_Systems}, where the problem is transformed into matrix form using a matrix $A$ with eigenvalues $\{\sigma, -\sigma\}$ and $\sigma = \sqrt{f'(0)g'(0)}$. All bifurcation diagrams confirm the existence of connected branches of positive solutions bifurcating from infinity under the superlinear subcritical hypothesis $(H_\infty)$, with solutions approaching infinity as $\lambda \to 0^+$. Our numerical eigenvalue calculations reveal distinct behaviors depending on derivative conditions: when $f'(0) = g'(0) = 0$, we obtain $\lambda_1 = \infty$ (Figures \ref{fig:solutions_quad_quad}, \ref{fig:solutions_quad_quad_linear_without}), while positive derivatives yield finite $\lambda_1$ values matching theoretical predictions and bifurcation from the trivial solution at $\lambda_1 = \frac{\mu_1}{\sqrt{f'(0)g'(0)}}$. The interaction between components $u$ and $v$ creates richer bifurcation structures than single equations, with both components bifurcating simultaneously from infinity and the coupling strength affecting bifurcation point locations. When $R_0 < 0$ (supercritical case), Figure \ref{fig:cubic_cubic} demonstrates the multiplicity phenomenon predicted by Theorem 1.3 in \cite{BCDMP_Systems}: multiple solutions exist in the parameter interval $(\lambda_1, \bar{\lambda})$, with turning point behavior creating regions where exactly two distinct positive solutions coexist.


\section{Acknowledgment} 
The first and second authors were partially supported by the National Science Foundation (NSF) grant DMS-2111059. 
The first author was also supported by the Simons Foundation grant 965180 and the AMS-Simons Travel Grants program.

\bibliographystyle{abbrv}

{\footnotesize  
\medskip
\medskip
\vspace*{1mm} 
 
\noindent {\it Shalmali Bandyopadhyay}\\  
Department of Mathematics and Statistics \\ 
The University of Tennessee at Martin\\
Martin, TN\\
E-mail: {\tt sbandyo5@utm.edu}\\ \\  

\noindent {\it Thomas Lewis}\\  
Department of Mathematics and Statistics \\ 
The University of North Carolina at Greensboro \\ 
Greensboro, NC\\
E-mail: {\tt tllewis3@uncg.edu}\\ \\

\noindent {\it Dustin Nichols}\\  
Department of Mathematics \\ 
High Point University\\
High Point, NC\\
E-mail: {\tt dnichols@highpoint.edu}\\ \\

\end{document}